\documentclass[12pt]{article}
\usepackage{graphicx}
\usepackage{amsmath,amsfonts,amssymb,amsthm}
\usepackage{bbm,mathrsfs,bm,mathtools}
\title{Pseudo-cones}
\author{Rolf Schneider}
\date{}
\sloppy
\jot3mm

\date{}
\sloppy
\jot3mm

\oddsidemargin 0.2cm
\evensidemargin 0.2cm
\topmargin 0.4cm
\headheight0cm
\headsep0cm
\textheight23.5cm
\topskip2ex
\textwidth15.5cm
\parskip1ex plus0.5ex minus0.5ex

\newcommand{\R}{{\mathbb R}}
\newcommand{\C}{{\mathcal C}}
\newcommand{\K}{{\mathcal K}}

\newcommand{\N}{{\mathbb N}}

\newcommand{\B}{\mathcal{B}}
\newcommand{\F}{{\mathcal F}}

  \renewcommand{\dim}{{\rm dim}\,}

  \newcommand{\cl}{{\rm cl}\,}

  \newcommand{\D}{{\rm d}}

  \newcommand{\fed}{\,\rule{.1mm}{.20cm}\rule{.20cm}{.1mm}\,}

\newtheorem{theorem}{Theorem}%[section]
\newtheorem{lemma}{Lemma}%[section]
\newtheorem{proposition}{Proposition}%[section]
%[section]
%[section]
\newtheorem{definition}{Definition}%[section]

\begin{document}
\maketitle

\begin{abstract}
Pseudo-cones are a class of unbounded closed convex sets, not containing the origin. They admit a kind of polarity, called copolarity. With this, they can be considered as a counterpart to convex bodies containing the origin in the interior. The purpose of the following is to study this analogy in greater detail. We supplement the investigation of copolarity, considering, for example, conjugate faces. Then we deal with the question suggested by Minkowski's theorem, asking which measures are surface area measures of pseudo-cones with given recession cone. We provide a sufficient condition for possibly infinite measures and a special class of pseudo-cones.
\\[2mm]
{\em Keywords: pseudo-cone, copolarity, conjugate face, surface area measure, Min\-kow\-ski's existence theorem, $C$-close set}  \\[1mm]
2020 Mathematics Subject Classification: Primary 52A20, Secondary 52A40
\end{abstract}

\section{Introduction}\label{sec1}

We work in $\R^n$ ($n\ge 2$), the $n$-dimensional Euclidean vector space with origin $o$, scalar product $\langle\cdot\,,\cdot\rangle$ and norm $\|\cdot\|$.

Essentially following \cite{XLL22}, we say that a subset $K\subset\R^n$  is a {\em pseudo-cone} if it is a nonempty closed convex set not containing the origin and satisfying $\lambda x\in K$ for $x\in K$ and $\lambda\ge 1$ (thus, we include closedness in the definition). Equivalently, a pseudo-cone is the nonempty intersection of a family of closed halfspaces not containing the origin in the interior, where at least one of the halfspaces does not contain the origin. 
Using the terminology of \cite{Ras17} and \cite{XLL22}, we define the {\em copolar set} of a pseudo-cone $K$ by
$$  K^\star \coloneqq  \{x\in\R^n: \langle x,y\rangle\le -1 \mbox{ for all }y\in K\}.$$
Obviously, this is again a pseudo-cone, and one can show that $K^{\star\star}\coloneqq  (K^\star)^\star=K$ (see, for example, \cite{ASW22}). 

Pseudo-cones can be considered as a counterpart to convex bodies containing the origin in the interior, and copolarity plays a similar role for pseudo-cones as the ordinary polarity does for convex bodies. It is the purpose of Sections \ref{sec3} and \ref{sec4} to make this evident, and to point out some differences.

If $K$ is a pseudo-cone and $C$ is its recession cone (always assumed to be pointed and full-dimensional), we say that $K$ is a $C$-pseudo-cone. Note that if $K$ is a $C$-pseudo-cone, then $K\subset C$ (see \cite{XLL22} for the reverse). Several types of $C$-pseudo-cones have been distinguished. $K$ is called {\em $C$-full} if $C\setminus K$ is bounded, and {\em $C$-close} if $C\setminus K$ has finite volume. The set $K$ is called {\em $C$-asymptotic} if the distance of $x\in\partial K$ from $\partial C$ tends to zero as $\|x\|\to\infty$. If $K$ is a $C$-pseudo-cone and if $z\in C$, then also $K+z$ is a $C$-pseudo-cone. Thus, a $C$-pseudo-cone need not be $C$-asymptotic.

For $C$-coconvex sets, that is, sets $C\setminus K$ where $K$ is a $C$-close pseudo-cone, a Brunn--Minkowski theory was developed in \cite{Sch18, Sch21} (and subsumed in \cite[Chap. 7]{Sch22}). As shown by Yang, Ye and  Zhu \cite{YYZ22}, also parts of the $L_p$ Brunn--Minkowski theory can be carried over. 

In Section \ref{sec6}, we deal again with the Brunn--Minkowski theory of $C$-close sets, which are a particular class of pseudo-cones. Minkowski's classical existence theorem (see, e.g., \cite[Sec. 8]{Sch14}) answers the following question: What are the necessary and sufficient conditions on a Borel measure on the unit sphere to be the surface area measure of a convex body? This question can also be formulated for pseudo-cones. To explain this, let $K$ be a pseudo-cone in $\R^n$ with recession cone $C$. Let $\Omega_{C^\circ}\coloneqq  S^{n-1}\cap{\rm int}\,C^\circ$ (this notation differs from the one used in \cite{Sch18}), where 
$$ C^\circ  \coloneqq  \{x\in\R^n: \langle x,y\rangle\le 0 \mbox{ for all }y\in C\}$$
denotes the polar cone of $C$ and $S^{n-1}$ is the unit sphere. For each $u\in \Omega_{C^\circ}$, the closed convex set $K$ has a supporting hyperplane with outer normal vector $u$. For a Borel set $\omega\subset\Omega_{C^\circ}$, the reverse spherical image ${\bm x}_K(\omega)$ of $K$ at $\omega$ is defined (as for convex bodies, but with different notation, cf. \cite[p. 88]{Sch14}) as the set of all points $x\in\partial K$ at which there exists a supporting hyperplane with outer unit normal vector belonging to $\omega$. Then one defines
$$ S_{n-1}(K,\omega)\coloneqq  {\mathscr H}^{n-1}({\bm x}_K(\omega)),$$
where $ {\mathscr H}^{n-1}$ is the $(n-1)$-dimensional Hausdorff measure. This yields a Borel measure $S_{n-1}(K,\cdot)$ on $\Omega_{C^\circ}$, the {\em surface area measure} of $K$. Thus, the surface area measure of a pseudo-cone is only defined on an open proper subset of the unit sphere (in fact, of a hemisphere), and it may be infinite. One may now ask for necessary and sufficient conditions on a Borel measure on $\Omega_{C^\circ}$ to be the surface area measure of a $C$-pseudo-cone. It was shown in \cite[Thm. 3]{Sch18} that every nonzero finite Borel measure on $\Omega_{C^\circ}$ with compact support (contained in $\Omega_{C^\circ}$) is the surface area measure of a $C$-full set.  In \cite[Thm. 1]{Sch21} it was then proved that every nonzero finite Borel measure on $\Omega_{C^\circ}$ is the surface area measure of a $C$-close set. This set is uniquely determined. In \cite{YYZ22}, these results were carried over to $L_p$ surface area measures,

The mentioned existence results are restricted to finite measures. If infinite measures are allowed, the situation changes. Not every infinite Borel measure on $\Omega_{C^\circ}$ is the surface area measure of some pseudo-cone. Local finiteness (that is, finiteness on compact subsets of $\Omega_{C^\circ}$) is necessary, but not sufficient. A non-trivial necessary condition for surface area measures of $C$-pseudo-cones was found in \cite[Sec. 4]{Sch21}. The subsequent theorem (proved in Section \ref{sec6}, after some preparations in Section 5) provides a sufficient condition for possibly infinite measures to be the surface area measure of a $C$-close set. For this, we need the following definition.

\begin{definition}\label{D1.1}
For $u\in\Omega_{C^\circ}$, let $\delta_C(u)$ be the spherical distance of $u$ from the boundary $\partial\Omega_{C^\circ}$ of $\Omega_{C^\circ}$, that is,
$$ \delta_C(u)\coloneqq  \min\{\angle(u,v): v\in\partial\Omega_{C^\circ}\},$$
where $\angle(u,v)$ is the geodesic distance between $u$ and $v$ (the angle between the vectors $u$ and $v$). % For $\alpha>0$, let
%$$ \omega(\alpha)\coloneqq  \{u\in\Omega_{C^\circ}: \delta_C(u)>\alpha\}.$$
\end{definition}
 
The condition in the theorem below ensures moderate growth of a measure on $\Omega_{C^\circ}$ when approaching the boundary of $\Omega_{C^\circ}$.

\begin{theorem}\label{T1.1} 
Let $\varphi$ be a nonzero Borel measure on $\Omega_{C^\circ}$. If 
$$\int_{\Omega_{C^\circ}} \delta_C^{1/n}\,\D\varphi<\infty,$$ 
then $\varphi$ is the surface area measure of a $C$-close pseudo-cone.
\end{theorem}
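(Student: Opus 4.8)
The natural strategy is approximation from below by the finite-measure case, for which Theorem~1 of \cite{Sch21} already applies. Concretely, choose an exhaustion of $\Omega_{C^\circ}$ by compact sets $\omega_1\subset\omega_2\subset\cdots$ with $\bigcup_k\omega_k=\Omega_{C^\circ}$, for instance the sublevel sets $\{u\in\Omega_{C^\circ}:\delta_C(u)\ge 1/k\}$, and set $\varphi_k\coloneqq\varphi\restriction\omega_k$. Each $\varphi_k$ is a nonzero finite Borel measure on $\Omega_{C^\circ}$ (nonzero for $k$ large), so by \cite[Thm.~1]{Sch21} there is a unique $C$-close pseudo-cone $K_k$ with $S_{n-1}(K_k,\cdot)=\varphi_k$. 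The plan is to show that the sequence $(K_k)$ is, in a suitable sense, bounded, pass to a convergent subsequence, and verify that the limit $K$ is a $C$-close pseudo-cone whose surface area measure is $\varphi$.

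For the compactness step I would use the copolar picture developed in Sections~\ref{sec3}--\ref{sec4}. A $C$-close pseudo-cone $K$ corresponds under copolarity to a pseudo-cone $K^\star$ whose recession cone is $C^\circ$, and the relevant ``size'' of $K$ should translate into a bound on how close $K^\star$ comes to the origin, equivalently a bound on a coconvex volume or on the value of the support-type function of $K$ in directions of $\Omega_{C^\circ}$. The growth hypothesis $\int_{\Omega_{C^\circ}}\delta_C^{1/n}\,\D\varphi<\infty$ is what feeds this bound: one estimates the volume $V_n(C\setminus K_k)$ (equivalently the total coconvex volume) in terms of $\int\delta_C^{1/n}\,\D\varphi_k\le\int\delta_C^{1/n}\,\D\varphi$, using that a point of $\partial K_k$ carrying normal direction $u$ lies at bounded distance from $\partial C$ governed by $\delta_C(u)$, and that the surface area carried near the boundary of $\Omega_{C^\circ}$ contributes volume only through a factor of order $\delta_C^{1/n}$. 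The exponent $1/n$ is exactly the scaling one expects from an isoperimetric/cone comparison: a slab of normal-cone-angle $\delta$ and surface area $s$ near $\partial C$ traps volume of order $s\cdot\delta^{1/n}$ after optimizing the one free length parameter in $n$ dimensions. This uniform volume bound, via the results of Section~\ref{sec5} relating coconvex volume, the function $\delta_C$, and convergence of pseudo-cones, yields that the $K_k$ stay within a fixed $C$-close pseudo-cone, hence (by a Blaschke-type selection theorem for pseudo-cones, which should be recorded in Section~\ref{sec5}) a subsequence converges to a closed convex set $K$.

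It remains to identify $K$. Since $K_k\supset K_{k+1}$ can be arranged (by monotonicity of the construction in \cite{Sch21}, the larger measure gives the smaller pseudo-cone), one may in fact take $K=\bigcap_k K_k=\cl\bigcup$\dots\ —more precisely $K$ is the closure of the increasing union of the $K_k$ after reversing inclusions, so $K=\bigcap_k K_k$ is itself a $C$-pseudo-cone, and the uniform volume bound gives $V_n(C\setminus K)<\infty$, i.e.\ $K$ is $C$-close. Finally, weak convergence of surface area measures under convergence of pseudo-cones (the analogue for $C$-close sets of the classical weak-continuity of $S_{n-1}$, again to be established in Section~\ref{sec5}) gives $S_{n-1}(K,\cdot)=\lim_k S_{n-1}(K_k,\cdot)=\lim_k\varphi_k=\varphi$ on $\Omega_{C^\circ}$, where the last equality is monotone convergence of measures on the exhausting sets $\omega_k$. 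The main obstacle is the quantitative volume estimate $V_n(C\setminus K_k)\lesssim\int\delta_C^{1/n}\,\D\varphi_k$ with the correct exponent and its use to prevent mass (equivalently, the pseudo-cones) from escaping to infinity as one approaches $\partial\Omega_{C^\circ}$; the weak-continuity and selection ingredients are more routine adaptations of the convex-body theory but must be set up carefully because the measures live on the non-compact set $\Omega_{C^\circ}$ and may be infinite.
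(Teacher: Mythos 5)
Your plan follows essentially the same route as the paper: exhaust $\Omega_{C^\circ}$ by sets on which $\delta_C$ is bounded away from zero, solve the finite-measure Minkowski problem on each piece, use the hypothesis $\int_{\Omega_{C^\circ}}\delta_C^{1/n}\,\D\varphi<\infty$ to get uniform bounds, extract a limit by the selection theorem (Lemma \ref{L2.2}), and identify the surface area measure of the limit by local weak convergence. Two inessential differences: the paper starts from Proposition \ref{P6.1} (the normalized $C$-full solutions $M_j$ from \cite{Sch18}, rescaled by $\lambda_j^{1/(n-1)}$) rather than from \cite[Thm.~1]{Sch21}, and copolarity plays no role whatsoever in the paper's argument -- the quantitative input you are gesturing at is exactly Theorem \ref{T5.1}, namely $\overline h_K(u)\le c_1\delta_C(u)^{1/n}$ under the normalization $V_n(C\setminus K)=1$. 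Note also that by homogeneity the correct form of your volume estimate is $V_n(C\setminus K_k)^{(n-1)/n}\lesssim\int\delta_C^{1/n}\,\D\varphi$ rather than a linear bound ($\delta_C$ is scale-invariant while volume and surface area scale differently); the self-improving version still gives the uniform bound you need.

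Two points in your limiting step are genuine gaps. First, the asserted monotonicity -- that the construction in \cite{Sch21} sends the larger measure to the smaller pseudo-cone, so that $K_k\supset K_{k+1}$ and one may take $K=\bigcap_k K_k$ -- is not proved in \cite{Sch21} or anywhere cited, and domination of surface area measures should not be expected to yield inclusion of the solution sets. This claim should simply be dropped: the subsequential limit from the selection theorem together with lower semicontinuity of $K\mapsto V_n(C\setminus K)$ (Lemma \ref{L2.3}) already shows that the limit is $C$-close. Second, to apply the selection theorem you must keep the $K_k$ bounded away from the origin, not only prevent escape to infinity (which the volume bound does); otherwise the limit could contain $o$ and fail to be a pseudo-cone. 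The paper secures this by fixing $\tau>0$ with $\varphi(\overline\omega(\tau))>0$, arranging $\overline\omega(\tau)\subset\omega_1$, and invoking Lemma \ref{L5.3}, which converts the uniform lower bound $S_{n-1}(K_k,\overline\omega(\tau))=\varphi(\overline\omega(\tau))>0$ into a lower bound on $b(K_k)$. Finally, the weak-convergence identification must be localized exactly because $\varphi$ may be infinite: restrict to $\omega_k$, intersect with $C^-(t_k)$ via Lemma \ref{L5.1}, and use weak continuity of the surface area measures of the truncated bodies; your sketch is compatible with this but needs that care spelled out.
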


However, the question for a necessary and sufficient condition remains open. We recall that the uniqueness was already settled in \cite[Thm. 2]{Sch18}: If $K_0,K_1$ are $C$-close sets with $S_{n-1}(K_0,\cdot)=S_{n-1}(K_1,\cdot)$, then $K_0=K_1$.

%We note (for later application) a consequence of the condition (\ref{1.1}). We have (see, e.g., \cite[p. 26]{LL01})
%\begin{eqnarray*} 
%\int_{\Omega_{C^\circ}} \delta_C^{1/n}\,\D\varphi &=& \int_0^\infty \varphi\left(\left\{u\in\Omega_{C^\circ}: \delta_C^{1/n}>\alpha\right\}\right)\,\D\alpha\\
%&=& \int_0^\infty \varphi(\omega(\alpha^n))\,\D\alpha =\int_0^a \varphi(\omega(\alpha^n))\,\D\alpha,
%\end{eqnarray*}
%since $\omega(\alpha^n)=\emptyset$ for $\alpha >a\coloneqq  (\pi/2)^{1/n}$. By assumption, $\varphi(\omega(\alpha^n))\le c\alpha^{-n\kappa}$ with $0<\kappa<1/n$, hence
%\begin{equation}\label{1.2} 
%\int_{\Omega_{C^\circ}} \delta_C^{1/n}\,\D\varphi<\infty.
%\end{equation}

\section{Preliminaries on pseudo-cones}\label{sec2}

We fix some notation and collect some facts about pseudo-cones, which either will be used below or are of independent interest. The set of all pseudo-cones in $\R^n$ is denoted by $ps\C^n$.

As usual, ${\rm int}\,S$ and ${\rm cl}\,S$ stand for the interior and closure of a set $S$. The boundary of $S$ is denoted by $\partial S$, in $\R^n$ as well as in $S^{n-1}$. The convex hull of a set $A\subset\R^n$ is ${\rm conv}\,A$. The $k$-dimensional volume, where it exists, is denoted by $V_k$. By $B^n$ we denote the unit ball of $\R^n$ with center at the origin. Hyperplanes and closed halfspaces are written in the following form. For $u\in \R^n\setminus\{o\}$  and $t\in\R$,
$$ H(u,t)\coloneqq  \{x\in\R^n: \langle x,u\rangle = t\}, \qquad H^-(u,t)\coloneqq  \{x\in\R^n: \langle x,u\rangle \le t\}.$$
If a closed convex set $K$ has a supporting halfspace with outer normal vector $u\in S^{n-1}$, then this halfspace is denoted by $H^-(K,u)$, and its boundary by $H(K,u)$.

The {\em recession cone} of a nonempty closed convex set $K\subset \R^n$ can be defined by
$$ {\rm rec}\,K\coloneqq  \{y\in\R^n: x+\lambda y\in K \mbox{ for all }x\in K\mbox{ and all }\lambda\ge 0\},$$
from which it follows that a pseudo-cone $K$ satisfies $K\subset {\rm rec}\,K$.

Throughout the following, $C$ is a fixed closed convex cone, assumed to be pointed and with nonempty interior. We can choose a unit vector $\mathfrak{v}\in{\rm int}\,C$ such that also $-\mathfrak{v}\in {\rm int}\,C^\circ$. In fact, if ${\rm int}\,C\cap{\rm int}\,(-C^\circ)=\emptyset$, then $C$ and $-C^\circ$ can be separated by a hyperplane (see \cite[Thm. 1.3.8]{Sch14}). Then $C$ and $C^\circ$ are contained in the same closed halfspace, a contradiction. Also $\mathfrak{v}$ is fixed in the following. For $t>0$, we write
$$ C(t)\coloneqq  C\cap H(\mathfrak{v},t),\qquad C^-(t)\coloneqq  C\cap H^-(\mathfrak{v},t).$$
These sets are nonempty and compact, due to the choice of $\mathfrak{v}$. The definition
$$ \Omega_{C^\circ} = S^{n-1} \cap {\rm int}\,C^\circ $$
was already mentioned. The notation $\B(\omega)$ is used for the $\sigma$-algebra of all Borel subsets of an open or compact subset $\omega\subseteq S^{n-1}$.

For a pseudo-cone $K$ with recession cone $C$, we define the {\em support function} $h(K,\cdot):C^\circ\to\R$ by
$$ h(K,u)\coloneqq  \max\{\langle u,x\rangle: x\in K\}\quad\mbox{for }u\in{\rm int}\,C^\circ$$
and
$$ h(K,u)\coloneqq  \sup\{\langle u,x\rangle: x\in K\}\quad\mbox{for }u\in\partial C^\circ.$$
Then $-\infty<h(K,u)\le 0$ for $u\in C^\circ$ and $h(K,u)<0$ for $u\in{\rm int}\,C^\circ$. It is clear that for $u\in{\rm int}\,C^\circ$ the maximum exists and is negative, since $K\subset C$ and $o\notin K$. To avoid many minus signs, we write $\overline h(K,\cdot) \coloneqq  -h(K,\cdot)$, so that
$$ \overline h(K,u) = \min\{|\langle x,u\rangle|: x\in K\}\quad\mbox{for }u\in{\rm int}\,C^\circ.$$
Instead of $\overline h(K,\cdot)$ we also write $\overline h_K$, whenever this is more convenient. We observe that $\overline h_K\le \overline h_L$ for $C$-pseudo-cones $K$ and $L$ implies $K\supseteq L$.

\begin{definition}\label{D2.1}
For a pseudo-cone $K$, we denote by $b(K)$ its distance from $o$, that is, 
$$ b(K)\coloneqq  \min\{r>0: rB^n\cap K\not=\emptyset\}.$$
\end{definition}

Since any supporting hyperplane of a pseudo-cone $K$ must intersect the ball $b(K)B^n$ (otherwise, this ball could be increased without intersecting $K$), the support function of $K$ satisfies
\begin{equation}\label{2.1} \overline h_K\le b(K).
\end{equation}

We expand Definition 1 from \cite{Sch18}.
\begin{definition}\label{D2.2}
If $K_j$, $j\in\N_0$, are $C$-pseudo-cones, we write
$$ K_j\to K_0$$
and say that $(K_j)_{j\in\N}$ converges to $K_0$ if there exists $t_0>0$ such that  $K_j\cap C^-(t_0)\not=\emptyset$ for all $j\in \N_0$ and
$$ \lim_{j\to\infty} (K_j\cap C^-(t)) = K_0\cap C^-(t)\quad\mbox{for all }t\ge t_0,$$
where the latter means convergence of convex bodies with respect to the Hausdorff metric.
\end{definition}

The following theorem is a counterpart to the Blaschke selection theorem for convex bodies.

\begin{lemma}\label{L2.2}{\em [Selection theorem for $C$-pseudo-cones]}
Every sequence of $C$-pseudo-cones whose distances from the origin are bounded and bounded away from $0$ has a subsequence that converges to a $C$-pseudo-cone.
\end{lemma}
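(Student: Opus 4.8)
The plan is to run the classical diagonal-subsequence argument behind the Blaschke selection theorem, but with the compact truncations $C^-(t)$ playing the role of a fixed bounded region, and then to reassemble a limiting pseudo-cone from the truncated limits. First, fix $0<a\le b$ with $a\le b(K_j)\le b$ for all $j$, and put $t_m\coloneqq b+1+m$ for $m\in\N_0$, so that $t_m\uparrow\infty$ and $C=\bigcup_m C^-(t_m)$. For each $j$ I pick $x_j\in K_j$ with $\|x_j\|=b(K_j)\le b$; since $K_j\subseteq C$ and $\|\mathfrak v\|=1$ this gives $\langle\mathfrak v,x_j\rangle\le\|x_j\|<t_0$, so $x_j\in K_j\cap C^-(t_m)$ for every $m$, and in particular each $K_j\cap C^-(t_m)$ is a nonempty compact convex subset of the fixed compact set $C^-(t_m)$.

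Applying the Blaschke selection theorem in turn to $(K_j\cap C^-(t_0))_j,(K_j\cap C^-(t_1))_j,\dots$, diagonalising, and passing once more to a subsequence along which the bounded points $x_j$ converge, I obtain a subsequence (still written $(K_j)$), nonempty compact convex sets $L_m\subseteq C^-(t_m)$ with $K_j\cap C^-(t_m)\to L_m$ for each $m$, and an anchor point $x_*\coloneqq\lim_j x_j\in L_0$ with $\langle\mathfrak v,x_*\rangle\le b<t_0$. I then record two structural facts: (i) $L_m\subseteq L_{m+1}$, since inclusions survive Hausdorff limits; and (ii) $L_{m'}\cap C^-(t_m)=L_m$ for $m'\ge m$. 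For (ii) I would use $K_j\cap C^-(t_m)=(K_j\cap C^-(t_{m+1}))\cap H^-(\mathfrak v,t_m)$ together with the standard continuity property of the Hausdorff metric: if convex bodies $A_j\to A$ and $A$ meets the interior of a closed halfspace $E$, then $A_j\cap E\to A\cap E$. Here $A=L_{m+1}$ meets $\inn H^-(\mathfrak v,t_m)$ because $x_*\in L_0\subseteq L_{m+1}$ does; this yields $L_m=L_{m+1}\cap C^-(t_m)$, and induction gives (ii) in general.

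Next I set $K_0\coloneqq\bigcup_{m\in\N_0}L_m$ and check it is the desired limit. It is convex (a nested union of convex sets) and contained in $\bigcup_m C^-(t_m)=C$; it is closed, because a convergent sequence in $K_0$ is bounded, hence contained in some $C^-(t_m)$, and then by (i) and (ii) it already lies in the closed set $L_m$, so its limit does too; it avoids $o$, since each of its points is a limit of points of some $K_j$, all of norm $\ge b(K_j)\ge a$; and $\lambda K_0\subseteq K_0$ for $\lambda\ge1$, since for $x=\lim_j x_j$ with $x_j\in K_j$ the points $\lambda x_j\in K_j$ have $\langle\mathfrak v,\lambda x_j\rangle$ bounded, hence lie eventually in some $C^-(t_{m'})$, whence $\lambda x\in L_{m'}\subseteq K_0$. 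So $K_0$ is a pseudo-cone, and the same truncate-and-pass-to-the-limit trick applied to $x_j+\lambda y$ for $y\in C$ (using $\rec K_j=C$) shows $C\subseteq\rec K_0$, so $\rec K_0=C$. Finally $K_0\cap C^-(t_m)=L_m$: ``$\supseteq$'' is immediate, and if $x\in K_0\cap C^-(t_m)$ then $x\in L_{m'}$ for some $m'$, hence $x\in L_{m'}\cap C^-(t_m)\subseteq L_m$ by (i) and (ii).

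To conclude, I read off Definition \ref{D2.2}. For each $m$ we have $K_j\cap C^-(t_m)\to L_m=K_0\cap C^-(t_m)$, and for an arbitrary $t\ge t_0$ I interpolate: choose $m$ with $t_m\ge t$ and use $K_j\cap C^-(t)=(K_j\cap C^-(t_m))\cap H^-(\mathfrak v,t)\to(K_0\cap C^-(t_m))\cap H^-(\mathfrak v,t)=K_0\cap C^-(t)$ by the same halfspace-intersection continuity of the Hausdorff metric, applicable since $x_*\in K_0\cap C^-(t_m)$ has $\langle\mathfrak v,x_*\rangle<t_0\le t$. Together with $K_j\cap C^-(t_0)\ne\emptyset$ for all $j\in\N_0$, this is exactly convergence in the sense of Definition \ref{D2.2}, and $K_0$ is a $C$-pseudo-cone. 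The main obstacle I anticipate is the repeated use of that intersection-continuity lemma and the need to certify at each step that the relevant limiting body genuinely reaches into the open halfspace; this is what forces the choice $t_0>b$ and makes the anchor point $x_*$, hence the upper bound $b(K_j)\le b$, indispensable, whereas the lower bound $a>0$ enters only to keep $o$ out of $K_0$.
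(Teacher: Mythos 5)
Your proposal is correct and follows essentially the same route as the paper: a diagonal Blaschke selection on the truncations $C^-(t_m)$, assembly of the limit as the union of the truncated limits, and verification that this union is a $C$-pseudo-cone to which the subsequence converges in the sense of Definition \ref{D2.2}. Your extra care with the anchor point $x_*$ and the halfspace-intersection continuity (including the interpolation to arbitrary $t\ge t_0$) only makes explicit steps the paper treats as standard.
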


\begin{proof}
Let $(K(j))_{j\in\N}$ be a sequence of $C$-pseudo-cones whose distances from the origin are bounded and bounded away from $0$. Then there is a constant $t_1>0$ such that $K(j)\cap C^-(t_1)\not=\emptyset$ for all $j\in\N$. Choose a sequence $t_1<t_2 <t_3< \dots$ tending to infinity. By the Blaschke selection theorem (see, e.g., \cite[Thm. 1.8.7]{Sch14}), the bounded sequence $(K(j)\cap C^-(t_1))_{j\in\N}$ of nonempty convex bodies has a convergent subsequence. Hence, there is a subsequence  $(j^{(1)}_i)_{i\in \N}$ of $(j)_{j\in\N}$ such that
$$ \lim_{i\to\infty} K(j_i^{(1)})\cap C^-(t_1)  = M_1$$ 
for some convex body $M_1$. We have $o\notin M_1$, since for the sequence $(K(j))_{j\in\N}$ the distances from the origin are bounded away from zero. Similarly, there is a subsequence $(j_i^{(2)})_{i\in\N}$ of $(j_i^{(1)})_{i\in\N}$ such that
$$ \lim_{i\to\infty} K(j_i^{(2)})\cap C^-(t_2)  = M_2$$ 
for some convex body $M_2$. In particular, the latter implies that
$$ M_2\cap C^-(t_1) =\lim_{i\to\infty} (K(j_i^{(2)})\cap C^-(t_2))\cap C^-(t_1) =M_1.$$
By induction, we obtain for each $k\ge 2$ a subsequence $(j_i^{(k)})_{i\in\N}$ of $(j_i^{(k-1)})_{i\in\N}$ such that
$$
\lim_{i\to\infty} K(j_i^{(k)})\cap C^-(t_k) = M_k
$$
for some convex body $M_k$ satisfying
$$ M_k\cap C^-(t_{k-1}) = M_{k-1}.$$
The diagonal sequence $(j_i)_{i\in\N}\coloneqq (j_i^{(i)})_{i\in\N}$ is a subsequence of each sequence $(j_i^{(k)})_{i\in\N}$, hence
$$ \lim_{i\to\infty} K(j_i)\cap C^-(t_k)= M_k$$
for each $k\in \N$. 

Now we define 
$$ M\coloneqq  \bigcup_{k\in\N} M_k.$$
Then
$$ M\cap C^-(t_k)= M_k$$ 
for $k\in\N$. The latter implies that $M$ is a closed convex set. 

We show that $M$ is a pseudo-cone. We have $o\notin M$ since $o\notin M_1$. Let $x\in M$ and $\lambda\ge 1$. Choose $k$ such that $\lambda x\in {\rm int}\,C^-(t_k)$. Since $\lim_{i\to\infty} K(j_i)\cap C^-(t_k)=M_k$, there is (by \cite[Thm. 1.3.8]{Sch14}) a sequence $(x_i)_{i\in\N}$ with $x_i\in K(j_i)\cap C^-(t_k)$ and $x_i\to x$ as $i\to\infty$. Since $\lambda x_i\in K(j_i)\cap C^-(t_k)$ for sufficiently large $i$ (since $\lambda x\in{\rm int}\,C^-(t_k)$), we deduce that $\lambda x\in M_k\subset M$. Thus $M$ is a pseudo-cone. In a similar way, one shows that $y\in C$ and $x\in M$ implies that $x+y\in M$, hence $y\in {\rm rec}\,M$ and thus $C\subseteq {\rm rec}\,M$. Conversely, suppose that $y\in\R^n\setminus C$. Then there is a vector $u$ such that $\langle x,u\rangle\le 0$ for $x\in C$ and $\langle y,u\rangle>0$. Let $x\in C$ and $\lambda>0$. We have $\langle x+\lambda y,u\rangle >0$ for large $\lambda$ and hence $x+\lambda y\notin C$, which means that $x+\lambda y\notin M$. Thus, $y\notin{\rm rec}\,M$. We have shown that ${\rm rec}\,M=C$. Thus, $M$ is a pseudo-cone with recession cone $C$. 

From  
$$ \lim_{i\to\infty} K(j_i)\cap C^-(t_k)= M\cap C^-(t_k)\quad\mbox{for }k\in\N$$
and the definition of the convergence of $C$-pseudo-cones it follows that $K(j_i)\to M$ as $i\to\infty$.
\end{proof}

The following shows that the function $K\mapsto V_n(C\setminus K)$ on $C$-pseudo-cones is lower semi-continuous, but not continuous.

\begin{lemma}\label{L2.3}
Let $K_i$, $i\in\N_0$, be $C$-pseudo-cones such that $K_i\to K_0$ as $i\to\infty$. Then
$$ V_n(C\setminus K_0) \le \liminf_{i\to\infty} V_n(C\setminus K_i).$$
Here strict inequality can hold.
\end{lemma}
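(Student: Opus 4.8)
\emph{Proof plan.} The plan is to prove the estimate as a lower-semicontinuity statement, by exhausting $C$ with the compact truncations $C^-(t)$ and transferring the continuity of volume on convex bodies. First I would fix, as in Definition~\ref{D2.2}, a number $t_0>0$ with $K_i\cap C^-(t_0)\neq\emptyset$ for all $i\in\N_0$ and
$$K_i\cap C^-(t)\longrightarrow K_0\cap C^-(t)\qquad(i\to\infty)$$
in the Hausdorff metric, for every $t\ge t_0$. For such $t$, all the sets $K_i\cap C^-(t)$, $i\in\N_0$, are convex bodies, so by the continuity of the volume functional on convex bodies with respect to the Hausdorff metric, $V_n(K_i\cap C^-(t))\to V_n(K_0\cap C^-(t))$ as $i\to\infty$. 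Since $K_i\subseteq C$, we have $K_i\cap C^-(t)\subseteq C^-(t)$, and $C^-(t)$ is compact, hence
$$V_n\bigl(C^-(t)\setminus K_i\bigr)=V_n\bigl(C^-(t)\bigr)-V_n\bigl(K_i\cap C^-(t)\bigr)<\infty$$
for every $i\in\N_0$. From $C^-(t)\setminus K_i\subseteq C\setminus K_i$ and letting $i\to\infty$ I obtain
$$\liminf_{i\to\infty}V_n(C\setminus K_i)\ \ge\ V_n\bigl(C^-(t)\bigr)-V_n\bigl(K_0\cap C^-(t)\bigr)\ =\ V_n\bigl(C^-(t)\setminus K_0\bigr)$$
for every $t\ge t_0$.

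It remains to let $t\to\infty$. Since $\langle y,\mathfrak{v}\rangle$ is a finite real number for every $y\in C$, the sets $C^-(t)$ increase to $C$ as $t\to\infty$; hence $C^-(t)\setminus K_0$ increases to $C\setminus K_0$, and continuity of Lebesgue measure from below gives $V_n(C^-(t)\setminus K_0)\to V_n(C\setminus K_0)$ (a limit in $[0,\infty]$). Combining this with the previous display yields $V_n(C\setminus K_0)\le\liminf_{i\to\infty}V_n(C\setminus K_i)$. I do not expect any deep obstacle here; the only point needing care is the order of the two limit passages — first $i\to\infty$ for fixed $t$, where all quantities are finite, then $t\to\infty$.

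For the last assertion I would exhibit an example in which the volume of the coconvex set escapes to infinity. Take $n=2$ and $C=\{(x,y):x\ge0,\ y\ge0\}$, and let $K_0=\{(x,y)\in C:x+y\ge1\}$, so that $C\setminus K_0$ is the open triangle with vertices $o,(1,0),(0,1)$ and $V_2(C\setminus K_0)=1/2$. For $i\in\N$ put
$$K_i=K_0\cap\{(x,y):x/i+iy\ge1\};$$
this is again a $C$-pseudo-cone (it is closed and convex, does not contain $o$, is stable under multiplication by scalars $\ge1$, and has recession cone $C$), and $C\setminus K_i$ is the union of $C\setminus K_0$ with the open triangle $T_i$ having vertices $o,(i,0),(0,1/i)$, which also has area $1/2$. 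These two triangles meet only inside $\{0\le x\le1,\ 0\le y\le1/i\}$, so the area of their overlap tends to $0$ and $V_2(C\setminus K_i)\to1$. On the other hand, for fixed $t$ the extra constraint $x/i+iy\ge1$ deletes from $K_0\cap C^-(t)$ only a sliver contained in $\{0\le y<1/i\}$, whose height tends to $0$, so $K_i\cap C^-(t)\to K_0\cap C^-(t)$ in the Hausdorff metric for every $t$, i.e.\ $K_i\to K_0$. Hence $V_2(C\setminus K_0)=1/2<1=\lim_{i\to\infty}V_2(C\setminus K_i)$, so strict inequality can occur; the mechanism is that the extra piece $T_i$ cut out of $K_0$ keeps area bounded away from $0$ while drifting to infinity along the $x$-axis, leaving no trace in the limit $K_0$.
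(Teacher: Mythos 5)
Your proof of the inequality is correct and follows essentially the paper's route: truncate with $C^-(t)$, use continuity of volume for convex bodies under Hausdorff convergence, and then let $t\to\infty$. The only organizational difference is that the paper splits into the cases $V_n(C\setminus K_0)=\infty$ and $V_n(C\setminus K_0)<\infty$ and works with an $\varepsilon$ (resp.\ an arbitrary level $a$), whereas you treat both at once by writing $V_n(C^-(t)\setminus K_i)=V_n(C^-(t))-V_n(K_i\cap C^-(t))$ (finite for each fixed $t$) and then invoking continuity of Lebesgue measure from below as $C^-(t)\uparrow C$; this is a slightly cleaner packaging of the same idea, and your care about the order of the two limit passages is exactly the right point. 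For the strict-inequality claim your example is genuinely different from the paper's: the paper takes a $C$-close $K_0\subset{\rm int}\,C$ and sets $K_i=(K_0\cap C^-(t_i))+C$, so that each $V_n(C\setminus K_i)=\infty$ while $K_i\to K_0$ with finite deficit (remarking that the example can be modified to give a finite limit), whereas you produce directly, in the planar quadrant, a sequence with finite volumes $V_2(C\setminus K_i)\to 1>1/2=V_2(C\setminus K_0)$, the defect escaping to infinity along a facet of $C$. Both examples are valid; yours has the small advantage of exhibiting a finite strict gap without further modification, while the paper's works in any dimension and for any cone $C$ with no computation. Your verification that $K_i\to K_0$ (the deleted sliver lies in $\{y<1/i\}$, so the Hausdorff distance of the truncations is $O(1/i)$) and that ${\rm rec}\,K_i=C$ is sound.
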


\begin{proof}
First we assume that $ V_n(C\setminus K_0)=\infty$. Let $a>0$. There exists $t_0$ such that $V_n(C^-(t_0)\setminus K_0)>a$. Since $V_n(C^-(t_0)\cap K_i)\to V_n(C^-(t_0)\cap K_0)$ as $i\to\infty$, there is a number $i_0$ with $V_n(C^-(t_0)\setminus K_i)>a/2$ for $i\ge i_0$, hence $V_n(C\setminus K_i)>a/2$ for $i\ge i_0$. Since $a>0$ was arbitrary, it follows that $\liminf_{i\to\infty} V_n(C\setminus K_i)=\infty$. 

Now we assume that $V_n(C\setminus K_0)<\infty$. Let $\varepsilon>0$. We can choose $t>0$ with $V_n(C^-(t)\setminus K_0) \ge V_n(C \setminus K_0) -\varepsilon$. Since $K_i\cap C^-(t) \to K_0\cap C^-(t)$ as $i\to\infty$, for all sufficiently large $i$ we have
$$ V_n(C\setminus K_i) \ge V_n(C^-(t)\setminus K_i) \ge V_n(C^-(t)\setminus K_0)-\epsilon\ge V_n(C\setminus K_0)-2\varepsilon,$$
thus 
$$ V_n(C\setminus K_0)\le \liminf_{i\to\infty} V_n(C\setminus K_i) +2\varepsilon.$$
Since $\varepsilon>0$ was arbitrary, it follows that $V_n(C\setminus K_0)\le \liminf_{i\to\infty} V_n(C\setminus K_i)$. 

To show that strict inequality is possible, we choose a $C$-close pseudo-cone $K_0\subset{\rm int}\,C$ and a sequence $t_1< t_2 < t_3 <\dots \to\infty$ with $K_0\cap C^-(t_1)\not=\emptyset$. For $i\in\N$ we define
$$ K_i\coloneqq  (K_0\cap C^-(t_i))+C,$$
which is a $C$-pseudo-cone. Since $K_0\cap C^-(t_i)\subset {\rm int}\,C$, it easy to see that $V_n(C\setminus K_i) =\infty$. On the other hand, $\lim_{i\to\infty} K_i=K_0$ and $V_n(C\setminus K_0)<\infty$.

The example can be modified so that $\liminf_{i\to\infty} V_n(C\setminus K_i)$ is a finite value larger than $V_n(C\setminus K_0)$.
\end{proof}

\section{Copolarity}\label{sec3}

For an arbitrary set $\emptyset\not= A\subseteq\R^n$ we define the {\em copolar set} by
$$ A^\star \coloneqq  \{x\in\R^n: \langle x,y\rangle\le -1 \mbox{ for all }y\in A\}$$
and the {\em shadow} of $A$ (imagining a light source at $o$) by
$${\rm shad}\,A\coloneqq  \{\lambda x:x\in A,\, \lambda\ge 1\}.$$

\begin{lemma}\label{L3.0}
Let $\emptyset\not=A\subseteq\R^n$. Then $A^\star\not=\emptyset$ if and only if $o\notin{\rm cl\,conv}\,A.$

Suppose that $o\notin{\rm cl\,conv}\,A$. Then $A^\star$ is a pseudo-cone, and $A^{\star\star} = {\rm shad\,cl\,conv}\,A$.
\end{lemma}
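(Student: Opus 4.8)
The overall plan is to settle the equivalence in the first sentence by a separation argument, and to obtain the biconjugate formula by reducing it --- through the stability of the copolar operation --- to the involution $K^{\star\star}=K$ for pseudo-cones recalled in Section~\ref{sec1}. For the equivalence: if $A^\star\ne\emptyset$, pick $x\in A^\star$; then $\langle x,y\rangle\le-1$ for all $y\in A$, so $A$, and hence $\cl\conv A$, lies in the halfspace $H^-(x,-1)$, which excludes $o$ since $\langle x,o\rangle=0>-1$; thus $o\notin\cl\conv A$. Conversely, if $o\notin\cl\conv A$, a separation theorem (see \cite[Thm.~1.3.8]{Sch14}) applied to the closed convex set $\cl\conv A$ and the point $o$ produces $u\ne o$ and $\tau<0$ with $\langle u,y\rangle\le\tau$ for all $y\in\cl\conv A$, and then $-u/\tau\in A^\star$.

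For the second part I would first check that $A^\star$ is a pseudo-cone: it equals the intersection $\bigcap_{y\in A}H^-(y,-1)$ of closed halfspaces, each missing $o$ (every $y\in A$ is nonzero since $o\notin\cl\conv A$), so it is closed, convex and does not contain $o$, and $x\in A^\star$, $\lambda\ge1$ give $\langle\lambda x,y\rangle=\lambda\langle x,y\rangle\le-\lambda\le-1$, so $\lambda x\in A^\star$. The key step is then the stability $A^\star=(\cl\conv A)^\star=({\rm shad\,cl\,conv}\,A)^\star$: the inequality $\langle x,\cdot\rangle\le-1$ passes from $A$ to convex combinations, then by continuity to the closure, and finally --- because its right-hand side is negative --- to all multiples by factors $\ge1$.

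Next I would verify that $L\coloneqq{\rm shad\,cl\,conv}\,A$ is itself a pseudo-cone. It is nonempty; $o\notin L$, since $o=\lambda w$ with $\lambda\ge1$ and $w\in\cl\conv A$ would give $w=o\in\cl\conv A$; it is closed under multiplication by factors $\ge1$ by construction; it is convex because dividing $\mu(\lambda w)+(1-\mu)(\lambda'w')$ by the scalar $\mu\lambda+(1-\mu)\lambda'\ (\ge 1)$ leaves a convex combination of $w,w'\in\cl\conv A$; and it is closed because if $z_j=\lambda_jw_j\to z$ with $w_j\in\cl\conv A$ and $\lambda_j\ge1$, then $(z_j)$ is bounded, so an unbounded subsequence of $(\lambda_j)$ would force a subsequence of $w_j=z_j/\lambda_j$ to tend to $o\in\cl\conv A$, which is impossible; hence $(\lambda_j)$ is bounded, and along a subsequence $\lambda_{j_k}\to\lambda\ge1$, $w_{j_k}\to z/\lambda\in\cl\conv A$, so $z=\lambda(z/\lambda)\in L$. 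With $L$ a pseudo-cone, $A^{\star\star}=(A^\star)^\star=(L^\star)^\star=L$ by the involution property.

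I expect the closedness of $L$ when $\cl\conv A$ is unbounded to be the only genuine obstacle, the crux being that the scaling factors in a convergent sequence of shadow points cannot blow up without dragging a point of $\cl\conv A$ to the origin, which is excluded. Should one prefer not to invoke $K^{\star\star}=K$, the inclusion $A^{\star\star}\subseteq L$ can be proved directly: for $z\notin L$ the compact segment $\{tz:0\le t\le1\}$ is disjoint from $\cl\conv A$ (here $o\notin\cl\conv A$ is used), hence strictly separated from it, and normalizing the separating functional yields an $x\in A^\star$ with $\langle z,x\rangle>-1$; the reverse inclusion $L\subseteq A^{\star\star}$ follows from the stability above together with the general fact $L\subseteq L^{\star\star}$.
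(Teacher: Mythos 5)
Your argument is correct, but your main route differs from the paper's. The paper proves the biconjugation formula entirely by separation: it first notes $A\subseteq A^{\star\star}$ and, since $A^{\star\star}$ is a pseudo-cone, concludes ${\rm shad\,cl\,conv}\,A\subseteq A^{\star\star}$; for the reverse inclusion it takes $z\notin{\rm shad\,cl\,conv}\,A$, strongly separates the segment $[o,z]$ from the shadow, normalizes to get $v\in A^\star$ with $\langle v,z\rangle>-1$, and concludes $z\notin A^{\star\star}$. You instead reduce to the involution $K^{\star\star}=K$ for pseudo-cones via the stability $A^\star=(\cl\conv A)^\star=({\rm shad\,cl\,conv}\,A)^\star$ together with a careful verification that the shadow is itself a pseudo-cone (your closedness argument, bounding the scaling factors because $\cl\conv A$ stays away from $o$, is a detail the paper leaves implicit, so this is a genuine gain in completeness). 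One caveat: within the paper the involution is only quoted from the literature, and the remark following the lemma presents the lemma as what \emph{yields} $K^{\star\star}=K$; so if you lean on the involution you either rely on the external reference or risk circularity in the paper's own development. Your fallback removes this issue, and it is essentially the paper's proof, with the small (and legitimate) variation that you separate the segment $[o,z]$ from $\cl\conv A$ rather than from the shadow, which suffices because $A^\star=(\cl\conv A)^\star$. In short: the primary route buys a cleaner conceptual reduction (copolarity only sees ${\rm shad\,cl\,conv}$) at the price of an external input, while the paper's (and your fallback) argument is self-contained separation.
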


\begin{proof}
If $o\in A$, then $A^\star=\emptyset$. Otherwise,
$$ A^\star=\bigcap_{y\in A} H^-(y,-1),$$
from which it follows that $A^\star$ is either empty or a pseudo-cone. If $A^\star\not=\emptyset$, there is some $x\in\R^n\setminus\{o\}$ with $\langle x,y\rangle\le -1$ for all $y\in A$, thus $A\subseteq H^-(x,-1)$. This implies that $o\notin{\rm cl\,conv}\,A$. Conversely, if this holds, then $o$ and ${\rm cl\,conv}\,A$ can be strongly separated by a hyperplane, hence there is a vector $v$ such that $\langle v,y\rangle \le -1$ for all $y\in {\rm cl\,conv}\, A$. Then $v\in A^\star$ and thus $A^\star\not=\emptyset$.

Let $y\in A$. For all $x\in A^\star$ we have $\langle x,y\rangle\le -1$, hence $y\in A^{\star\star}$. Thus $A\subseteq A^{\star\star}$. Since $A^{\star\star}$ is a pseudo-cone, it follows that ${\rm shad\,cl\,conv}\,A \subseteq A^{\star\star}$.

Let $z\in\R^n$ be such that $z\notin {\rm shad\,cl\,conv}\,A$. Since the latter is a pseudo-cone, it does not intersect the closed segment $[o,z]$ with endpoints $o$ and $z$. Therefore, {\rm shad\,cl\,conv}\,A and $[o,z]$ can be strongly separated by a hyperplane (e.g., \cite[Thm. 1.3.7]{Sch14}), that is, there are a vector $v\not= o$ and a number $\tau<0$ such that $\langle v,x\rangle \le \tau$ for all $x\in {\rm shad\,cl\,conv}\,A$ and $\langle v,z\rangle >\tau$. After multiplying $v$ and $\tau$ by a suitable positive number, we may assume that $\tau=-1$. Then $\langle v,x\rangle\le -1$ for all $x\in A$ and hence $v\in A^\star$.  Since $\langle v,z\rangle >-1$, we deduce that $z\notin A^{\star\star}$. We have proved that $A^{\star\star} = {\rm shad\,cl\,conv}\,A$.
\end{proof}

Before turning to pseudo-cones, we show that also copolarity of closed convex sets has a linearization, similar as for the usual polarity of convex sets. For $A\subset\R^n$, we denote by ${\mathbbm 1}_A$ the characteristic function of $A$, that is,
$$ {\mathbbm 1}_A(x)\coloneqq  \begin{cases} 1, & \mbox{ if } x\in A,\\ 0, & \mbox{ if } x\in\R^n\setminus A.\end{cases}$$
Let $\C\C^n$ be the set of nonempty closed convex subsets of $\R^n$, let ${\sf U}(\C\C^n)$ be the set of finite unions of elements from $\C\C^n$ (including $\emptyset$), and let $V(\C\C^n)$ be the real vector space spanned by the characteristic functions of sets in $\C\C^n$. The following theorem and its proof are analogous to those for the ordinary polarity of convex sets (see, e.g., \cite[Sec. IV.1]{Bar02} or \cite[Thm. 1.8.2]{Sch22}), but for the reader's convenience we give the proof in full.

\begin{theorem}\label{T3.1}
There is a linear mapping $\phi_{\rm copol}: V(\C\C^n)\to V(\C\C^n)$ such that
$$ \phi_{\rm copol}({\mathbbm 1}_{K})={\mathbbm 1}_{K^\star} \quad\mbox{for } K\in {\C\C}^n.$$
\end{theorem}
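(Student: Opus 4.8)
The plan is to mimic the classical inclusion–exclusion argument used for ordinary polarity (as in \cite[Thm.~1.8.2]{Sch22}), with the copolar map playing the role of the polar map. The key point is that the operation $K\mapsto K^\star$ sends intersections of closed convex sets to ``unions'' in a way compatible with characteristic functions, so one can define $\phi_{\rm copol}$ on the union-semilattice ${\sf U}(\C\C^n)$ by hand and then check linearity.

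First I would record the basic set-theoretic rule: for $K,L\in\C\C^n$ with $o\notin K$ (so that $K^\star\neq\emptyset$), one has $(K\cap L)^\star=\operatorname{shad\,cl\,conv}(K^\star\cup L^\star)$ when $K\cap L\neq\emptyset$, by Lemma~\ref{L3.0} applied to $A=K^\star\cup L^\star$ together with $K^{\star\star}=\operatorname{shad\,cl\,conv}K=\operatorname{shad}K$ (for $K$ a pseudo-cone, $\operatorname{shad}K=K$). More useful for the linearization is the ``complementary'' viewpoint: following Groemer's scheme, one shows that on finite unions $\bigcup_{i=1}^m K_i$ of pseudo-cones, the characteristic function determines, via inclusion–exclusion, a well-defined element
$$
\phi_{\rm copol}\Bigl({\mathbbm 1}_{\bigcup_i K_i}\Bigr)\coloneqq \sum_{\emptyset\neq I\subseteq\{1,\dots,m\}}(-1)^{|I|+1}{\mathbbm 1}_{(\bigcap_{i\in I}K_i)^\star},
$$
where a term is interpreted as the zero function whenever the corresponding intersection is empty. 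One must check this is independent of the representation of the union; this is the standard valuation-extension lemma (Groemer's integral theorem, cf.\ \cite[Thm.~1.8.1]{Sch22}), and it applies here verbatim once one knows that $K\mapsto {\mathbbm 1}_{K^\star}$ is additive in the appropriate sense on $\C\C^n$, i.e.\ that for $K,L,K\cup L\in\C\C^n$ we have the identity ${\mathbbm 1}_{(K\cup L)^\star}={\mathbbm 1}_{K^\star}+{\mathbbm 1}_{L^\star}-{\mathbbm 1}_{(K\cap L)^\star}$ as functions on $\R^n$. This pointwise identity is the technical heart: one verifies it by chasing, for a fixed $x\in\R^n$, the four membership conditions $\langle x,y\rangle\le -1$ for all $y$ in $K$, in $L$, in $K\cap L$, in $K\cup L$, and noting that ``for all $y\in K\cup L$'' is equivalent to ``for all $y\in K$ and for all $y\in L$,'' so $x\in(K\cup L)^\star \iff x\in K^\star\cap L^\star$, while $x\in(K\cap L)^\star\supseteq K^\star\cup L^\star$ in general — and the inclusion–exclusion count of the indicators still balances because the left side lives on the smaller set $K^\star\cap L^\star$ and the right side on $(K^\star\cup L^\star)\cup(K\cap L)^\star$; here the subtlety is exactly that $(K\cap L)^\star$ can be strictly larger than $\operatorname{cl\,conv}(K^\star\cup L^\star)$, so one cannot hope for the naive identity, and the correct statement is rather that $\phi_{\rm copol}$ is forced by the union relation $(K\cup L)^\star=K^\star\cap L^\star$, not by any intersection relation. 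I would therefore reorganize: define $\psi(K)\coloneqq {\mathbbm 1}_{K^\star}$ and observe $\psi(K\cup L)=\psi(K)\psi(L)$ (product of indicators), which does \emph{not} make $\psi$ a valuation; so instead one passes through the bijective correspondence $K\leftrightarrow K^\star$ on pseudo-cones and uses that, dually, ordinary intersections of pseudo-cones correspond to $\operatorname{shad\,cl\,conv}$ of unions of their copolars, together with the fact that the ordinary-polarity linearization theorem is already known — i.e.\ one transports $\phi_{\rm copol}$ from $\phi_{\rm polar}$ via the substitution that exchanges the roles of origin and infinity.

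Concretely, the cleanest route: reduce to the known theorem for ordinary polarity. There is a bijection between pseudo-cones $K$ (with $o\notin K$) and convex bodies containing $o$ in the interior, or more precisely between pseudo-cones and certain closed convex sets, realized through the copolar map and the observation $K^{\star\star}=K$. The ordinary-polarity linearization gives a linear $\phi_{\rm polar}$ with $\phi_{\rm polar}({\mathbbm 1}_L)={\mathbbm 1}_{L^\circ}$; I would check that on the subspace of $V(\C\C^n)$ spanned by indicators of pseudo-cones, the map ${\mathbbm 1}_K\mapsto {\mathbbm 1}_{K^\star}$ agrees with the restriction of a polarity-type valuation, because both are built from the same lattice structure: complementation of halfspace membership. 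Then extend by linearity from pseudo-cones to all of $\C\C^n$, using that every $K\in\C\C^n$ with $o\notin K$ has $K^\star$ a pseudo-cone and $K^{\star\star}=\operatorname{shad\,cl\,conv}K$, and handling the case $o\in K$ separately (then $K^\star=\emptyset$, ${\mathbbm 1}_{K^\star}=0$, and one must check consistency, which is automatic since in any linear relation $\sum c_i{\mathbbm 1}_{K_i}=0$ the sets with $o$ in them contribute indicators that are $1$ near $o$, forcing their coefficients to be accounted for by the relation restricted to a neighborhood of $o$).

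The main obstacle I expect is precisely the failure of the naive intersection–union duality: unlike ordinary polarity on convex bodies containing $o$, where $(K\cap L)^\circ=\operatorname{cl\,conv}(K^\circ\cup L^\circ)$ holds cleanly, for copolarity the identity $(K\cap L)^\star=\operatorname{shad\,cl\,conv}(K^\star\cup L^\star)$ is fine but the reverse relation $(K\cup L)^\star=K^\star\cap L^\star$ is the one that drives the valuation property, and reconciling these two into a genuine \emph{linear} extension — rather than merely a valuation on a lattice — requires care about which semilattice of sets ($\C\C^n$ under $\cap$, or under $\cup$) one uses as the domain, and about the sets where various indicators are supported. I would resolve this by working throughout on ${\sf U}(\C\C^n)$ (finite unions), on which $A\mapsto {\mathbbm 1}_{A^\star}$, $A^\star\coloneqq \bigcap_{y\in A}H^-(y,-1)$ for $o\notin A$ and $\emptyset$ otherwise, is additive in the sense needed for Groemer's extension, and then invoke Groemer's theorem to get linearity on $V(\C\C^n)=V({\sf U}(\C\C^n))$. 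The remaining bookkeeping — that $\phi_{\rm copol}$ so obtained really does send ${\mathbbm 1}_K$ to ${\mathbbm 1}_{K^\star}$ for a single $K$, and that it is well-defined on the $o\in K$ cases — is then routine.
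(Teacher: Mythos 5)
There is a genuine gap. Your proposal circles three different strategies (inclusion--exclusion over finite unions via a Groemer-type extension, a ``transport'' from the known linearization of ordinary polarity, and a direct lattice argument) and completes none of them. The Groemer route requires verifying that $K\mapsto{\mathbbm 1}_{K^\star}$ satisfies the \emph{full} inclusion--exclusion identities on $\C\C^n$ (not just the pairwise identity ${\mathbbm 1}_{(K\cup L)^\star}+{\mathbbm 1}_{(K\cap L)^\star}={\mathbbm 1}_{K^\star}+{\mathbbm 1}_{L^\star}$ for $K\cup L$ convex, which you yourself flag as doubtful and never actually prove, though it is in fact provable); moreover, the extension theorem you invoke is stated for continuous valuations on \emph{compact} convex bodies, whereas here the members of $\C\C^n$ are arbitrary nonempty closed convex sets, many of them unbounded, so ``applies verbatim'' is unjustified. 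The ``exchange origin and infinity'' shortcut has no map behind it: copolarity is not conjugate to ordinary polarity by any transformation of $\R^n$ acting linearly on $V(\C\C^n)$, so the polarity linearization theorem does not formally transfer. Finally, the consistency of setting ${\mathbbm 1}_{K^\star}=0$ when $o\in K$ is asserted to be ``automatic'' without argument; in a linear-extension problem this is exactly the kind of point that must be checked.

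The paper's proof avoids all of this with one device you do not mention, and which is the actual missing idea: the Euler characteristic $\overline\chi$, which exists as a linear functional on $V(\C\C^n)$ with $\overline\chi({\mathbbm 1}_K)=1$ for all $K\in\C\C^n$ (quoted from the literature on valuations on closed convex sets). One then sets $H_{y,\varepsilon}\coloneqq\{x\in\R^n:\langle x,y\rangle\ge -1+\varepsilon\}$ and defines $\phi_\varepsilon(g)(y)\coloneqq\overline\chi(g)-\overline\chi(g\,{\mathbbm 1}_{H_{y,\varepsilon}})$ for $g\in V(\C\C^n)$; this is linear in $g$ by construction, one computes $\phi_\varepsilon({\mathbbm 1}_K)(y)=1$ exactly when $K\cap H_{y,\varepsilon}=\emptyset$, and letting $\varepsilon\downarrow 0$ yields ${\mathbbm 1}_{K^\star}(y)$, the case $K^\star=\emptyset$ included, with linearity preserved in the limit. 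No inclusion--exclusion over unions and no valuation property of the copolar map itself ever needs to be verified; all the combinatorial work is packaged in the existence of $\overline\chi$. To repair your argument you would either have to supply this construction (or an equivalent one) or genuinely prove the full additivity of $K\mapsto{\mathbbm 1}_{K^\star}$ on arbitrary finite unions of closed convex sets, which is the nontrivial content you currently defer to ``routine bookkeeping.''
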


\begin{proof}
There is a unique real valuation $\overline\chi$ on ${\sf U}(\C\C^n)$ satisfying $\overline \chi(K)=1$ for $K\in\C\C^n$ and $\overline\chi(\emptyset)=0$ (see, for example, \cite[Sec. 1.6]{Sch22}, in particular Thm. 1.6.8 and Note 2). Further (e.g., \cite[Thm. 1.6.2]{Sch22}), there is a linear mapping $\overline \chi:V(\C\C^n)\to \R$ with $\overline\chi({\mathbbm 1}_K)=1$ for $K\in\C\C^n$. For $y\in\R^n$ and $\varepsilon>0$ let $H_{y,\varepsilon}\coloneqq  \{x\in\R^n:\langle x,y\rangle\ge -1+\varepsilon\}$. For $g\in V(\C\C^n)$ define
$$ \phi_\varepsilon(g)(y)\coloneqq  \overline \chi(g) -\overline \chi(g{\mathbbm 1}_{H_{y,\varepsilon}})\quad\mbox{for }y\in\R^n.$$ 
The product $g{\mathbbm 1}_{H_{y,\varepsilon}}$ is an element of $V(\C\C^n)$, since $\C\C^n\cup \emptyset$ is closed under intersections. Thus, $\phi_\varepsilon$ is a linear mapping from $V(\C\C^n)$ into the vector space of real functions on $\R^n$. Now let $K\in {\C\C}^n$. Then we get
\begin{eqnarray*} 
\phi_\varepsilon({\mathbbm 1}_K)(y) &=& \overline\chi({\mathbbm 1}_K) - \overline\chi({\mathbbm 1}_{K\cap H_{y,\varepsilon}})\\
&=& \begin{cases}1, & \mbox{if } K\cap H_{y,\varepsilon}=\emptyset,\\ 0, & \mbox{if } K\cap H_{y,\varepsilon}\not=\emptyset,\end{cases}
= \begin{cases} 1, & \mbox{if } \langle x,y\rangle<-1+\varepsilon \;\forall\; x\in K,\\ 0, & \mbox{otherwise.}\end{cases}
\end{eqnarray*}
This implies that
\begin{eqnarray*} 
\lim_{\varepsilon\downarrow 0} \phi_\varepsilon({\mathbbm 1}_K)(y) &=&  \begin{cases} 1, & \mbox{if } \langle x,y\rangle \le -1\;\forall\; x\in K,\\ 0, & \mbox{otherwise, } \end{cases}\\
&=& {\mathbbm 1}_{K^\star}(y). 
\end{eqnarray*}
Hence, the limit $\phi_{\rm copol}(g)\coloneqq  \lim_{\varepsilon\downarrow 0}\phi_\varepsilon(g)$ exists for each $g\in V(\C\C^n)$ and defines a linear mapping $\phi_{\rm copol}:V(\C\C^n) \to V(\C\C^n)$ with $\phi_{\rm copol}({\mathbbm 1}_K) ={\mathbbm 1}_{K^\star}$. 
\end{proof}

Now we restrict ourselves to pseudo-cones. We remark that for a pseudo-cone $K$, Lemma \ref{L3.0} immediately gives $K^{\star\star}=K$. In the following lemma, the recession cone of $K$ need neither be pointed nor full-dimensional.

\begin{lemma}\label{L3a}
If $K\in ps\C^n$, then ${\rm rec}\,K^\star= ({\rm rec}\,K)^\circ$.
\end{lemma}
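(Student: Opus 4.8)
The plan is to prove the two inclusions $\rec K^\star \subseteq (\rec K)^\circ$ and $(\rec K)^\circ \subseteq \rec K^\star$ separately, working directly from the defining formulas for the copolar set, the recession cone, and the polar cone. Throughout I would use the description $K^\star = \bigcap_{y\in K} H^-(y,-1) = \{x : \langle x,y\rangle \le -1 \text{ for all } y\in K\}$ from the definition, together with the characterization of the recession cone of a nonempty closed convex set as $\rec M = \{z : x + \lambda z \in M \text{ for all } x\in M,\ \lambda \ge 0\}$.

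First I would show $\rec K^\star \subseteq (\rec K)^\circ$. Take $z \in \rec K^\star$ and fix an arbitrary $x_0 \in K^\star$ (nonempty since $K$ is a pseudo-cone, so $o\notin\cl\conv K$ and Lemma~\ref{L3.0} applies). Then $x_0 + \lambda z \in K^\star$ for all $\lambda \ge 0$, i.e.\ $\langle x_0 + \lambda z, y\rangle \le -1$ for every $y \in K$ and every $\lambda\ge 0$. For a fixed $y\in K$ this reads $\langle x_0,y\rangle + \lambda\langle z,y\rangle \le -1$ for all $\lambda\ge 0$; letting $\lambda\to\infty$ forces $\langle z,y\rangle \le 0$. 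Since this holds for all $y\in K$ and $\rec K$ is contained in the closed cone generated by $K$ (indeed, for a pseudo-cone $K\subseteq\rec K$, and one can also use that $\rec K = \bigcap_{\mu>0}\mu^{-1}(K - x_0)$ or argue via scaling: if $w\in\rec K$ and $y_0\in K$ then $y_0 + \mu w\in K$, so $\langle z, y_0+\mu w\rangle\le 0$, and dividing by $\mu$ and letting $\mu\to\infty$ gives $\langle z,w\rangle\le 0$), we get $\langle z,w\rangle \le 0$ for all $w\in\rec K$, i.e.\ $z\in(\rec K)^\circ$.

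For the reverse inclusion $(\rec K)^\circ \subseteq \rec K^\star$, take $z\in(\rec K)^\circ$, so $\langle z,w\rangle\le 0$ for all $w\in\rec K$. I want $x + \lambda z\in K^\star$ for every $x\in K^\star$ and $\lambda\ge 0$; equivalently $\langle x+\lambda z,y\rangle\le -1$ for all $y\in K$. We know $\langle x,y\rangle\le -1$ already, so it suffices to show $\langle z,y\rangle\le 0$ for all $y\in K$. Since $K\subseteq\rec K$ (pseudo-cone property, noted right after the definition of $\rec K$ in Section~\ref{sec2}), every $y\in K$ lies in $\rec K$, whence $\langle z,y\rangle\le 0$. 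This completes the inclusion, and hence equality.

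The argument is essentially routine; the only point requiring a little care is the passage from "$\langle z,y\rangle\le 0$ for all $y\in K$" to "$\langle z,w\rangle\le 0$ for all $w\in\rec K$" in the first inclusion, where one should not merely invoke $K\subseteq\rec K$ (that gives the wrong direction) but rather use that $\rec K$ is the asymptotic cone, obtainable as a limit of rescalings of $K$, so that any linear inequality valid on $K$ and homogeneous (i.e.\ with right-hand side $0$) passes to $\rec K$. Concretely: for $w\in\rec K$ and any fixed $y_0\in K$, the points $y_0+\mu w$ lie in $K$ for all $\mu\ge 0$, so $\langle z,y_0\rangle+\mu\langle z,w\rangle = \langle z,y_0+\mu w\rangle\le 0$, and dividing by $\mu$ and sending $\mu\to\infty$ yields $\langle z,w\rangle\le 0$. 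I expect this limiting step to be the main (minor) obstacle; everything else is direct manipulation of the defining inequalities.
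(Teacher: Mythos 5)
Your proof is correct and takes essentially the same route as the paper: both inclusions are derived directly from the definitions of the copolar set, the recession cone and the polar cone, with the easy inclusion using $K\subseteq\rec K$ exactly as in the paper's argument. If anything, your limiting step (using $y_0+\mu w\in K$ for $w\in\rec K$ and dividing by $\mu$) is a bit more careful than the paper's shortcut, which asserts $\lambda w\in K$ for large $\lambda$ --- a claim that can fail for boundary or degenerate recession directions (recall the lemma allows $\rec K$ to be non-pointed and lower-dimensional), although the conclusion is unaffected.
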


\begin{proof}
Let $K\in ps\C^n$. First, let $v\in ({\rm rec}\,K)^\circ$, hence $\langle v,w\rangle\le 0$ for all $w\in {\rm rec}\,K$. Let $x\in K^\star$ and $y\in K$. Then  $\langle x,y\rangle\le -1$ and $\langle v,y\rangle\le 0$ (since $y\in K\subset {\rm rec}\,K$), hence $\langle x+v,y\rangle \le -1$. Thus $x+v\in K^\star$. Since this holds for all $x\in K^\star$, we have $v\in {\rm rec}\,K^\star$. This proves that $({\rm rec}\,K)^\circ\subseteq {\rm rec}\, K^\star$.

Conversely, let $v\in{\rm rec}\,K^\star$. Then $x+v\in K^\star$ for all $x\in K^\star$, hence $\langle x+v,y\rangle\le -1$ for all $y\in K$. Since $\lambda y\in K$ for all $\lambda\ge 1$, we must have $\langle v,y\rangle\le 0$ for all $y\in K$. Let $w\in{\rm rec}\,K$. Then $\lambda w\in K$ for large $\lambda$, hence $\langle v,w\rangle\le 0$. It follows that $v\in ({\rm rec}\,K)^\circ$.
\end{proof}

Before continuing with copolarity of pseudo-cones, we give some references. Rashkovskii \cite[(3.1)]{Ras17} introduced copolarity for closed convex subsets with recession cone being (for the sake of simplicity, as he wrote) the nonnegative orthant, and used it to define and apply a certain copolar addition. Artstein--Avidan, Sadovsky and Wyczesany \cite {ASW22} developed a general theory of order reversing quasi involutions, as they called them, and gave many examples of involutions. Among them is (up to a reflection) the copolarity, there considered as the dual of the usual polarity of convex bodies containing the origin. (One may, however, notice that in a very special case and not under this name, copolarity already appeared in 1978, cf. Gigena \cite[Sec. 3]{Gig78}.) Xu, Li and Leng \cite{XLL22} have made a thorough study of copolarity and pseudo-cones. Their main result is the following. Let $n\ge 2$. A mapping $\tau$ from the set of pseudo-cones into itself satisfies $\tau(\tau(K))=K$ and $K\subset L\Rightarrow \tau(K)\supset\tau(L)$ for all pseudo-cones $K,L$ if and only if $\tau(K)=g(K^\star)$ for some $g\in {\rm GL}(n)$. They also observe (\ref{3.1})--(\ref{3.3}).

Let $K,L\in ps\C^n$. If $K\cap L\not=\emptyset$, then
\begin{eqnarray}\label{3.1}
(K\cap L)^\star= {\rm conv}(K^\star\cup L^\star).
\end{eqnarray}
If $o\notin {\rm conv}(K\cup L)$, then 
\begin{eqnarray}\label{3.2}
({\rm conv}(K\cup L))^\star= K^\star\cap L^\star.
\end{eqnarray}

Let $K$ be a $C$-pseudo-cone. The {\em radial function} $\rho(K,\cdot):{\rm int}\,C\to\R$ of $K$ is defined by
$$ \rho(K,x)\coloneqq  \min\{\lambda\in\R:\lambda x\in K\}\quad\mbox{for }x\in{\rm int}\,C.$$
Then  $0< \rho(K,x)<\infty$ for $x\in{\rm int}\,C$. (Note that $x\in{\rm int}\,C$ implies $\lambda x \in K$ for some $\lambda\ge 0$, since $C$ is the recession cone of $K$.) We have
\begin{equation}\label{3.3} 
\rho(K,x)= \frac{-1}{h(K^\star,x)} \quad\mbox{for }x\in {\rm int}\,C.
\end{equation}

First we supplement now these observations by some remarks. For $u\in S^{n-1}$ and $t<0$, the closed halfspace $H^-(u,t)$ is a pseudo-cone. We have $H^-(u,t)^\star =\R_{\ge 1/|t|}u$, where we use the abbreviation $\R_{\ge \lambda}\coloneqq \{r\in\R:r\ge \lambda\}$. If $K\in ps\C^n$ is a pseudo-cone with $K\cap H^-(u,t)\not=\emptyset$, it follows from (\ref{3.1}) that
$$ (K\cap H^-(u,t))^\star = {\rm conv}(K^\star \cup \R_{\ge 1/|t|}u).$$
On the other hand, $H^-(u,0)$ is not a pseudo-cone, but $K\cap H^-(u,0)$ is still a pseudo-cone, if it is not empty. Its copolar set is given by the following lemma.

\begin{lemma}\label{L3.1}
Let $K$ be a $C$-pseudo-cone and $H^-(u,0)$ a closed halfspace such that $K\cap H^-(u,0)\not=\emptyset$. Then 
$$ (K\cap H^-(u,0))^\star = K^\star+\R_{\ge 0}u.$$
\end{lemma}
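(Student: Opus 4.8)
The plan is to prove the two inclusions $(K\cap H^-(u,0))^\star \supseteq K^\star+\R_{\ge 0}u$ and $(K\cap H^-(u,0))^\star \subseteq K^\star+\R_{\ge 0}u$ separately. For the first, easier inclusion, I would take $x\in K^\star$ and $\lambda\ge 0$ and check directly that $x+\lambda u$ lies in the copolar of $K\cap H^-(u,0)$: for any $y\in K\cap H^-(u,0)$ we have $\langle x,y\rangle\le -1$ because $x\in K^\star$ and $y\in K$, and $\langle \lambda u,y\rangle\le 0$ because $\lambda\ge 0$ and $y\in H^-(u,0)$; adding gives $\langle x+\lambda u,y\rangle\le -1$. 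Hence $x+\lambda u\in (K\cap H^-(u,0))^\star$, so $K^\star+\R_{\ge 0}u\subseteq (K\cap H^-(u,0))^\star$.

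For the reverse inclusion, the natural route is to use the biconjugation identity. Note that $K^\star+\R_{\ge 0}u$ is a closed convex set (one should check closedness; since $u$ is a fixed direction and $K^\star$ is a pseudo-cone, $K^\star+\R_{\ge 0}u$ is the image of the closed set $K^\star\times\R_{\ge 0}$ under addition — closedness follows because $\R_{\ge 0}u\subseteq \rec(K^\star+\R_{\ge 0}u)$ and more carefully because any convergent sequence $x_j+\lambda_j u$ either has bounded $\lambda_j$, giving the limit in the sum, or one can subtract a suitable multiple of $u$ and use that $K^\star$ is closed). It is also a pseudo-cone, provided $o\notin K^\star+\R_{\ge 0}u$; since $K\cap H^-(u,0)\ne\emptyset$, Lemma~\ref{L3.0} applied to $A=K\cap H^-(u,0)$ tells us its copolar is nonempty, so by the first inclusion $K^\star+\R_{\ge 0}u$ is contained in a pseudo-cone and hence does not contain $o$. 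Now I would apply $\star$ to both sides of the desired equality: since $\star$ is an order-reversing involution on pseudo-cones ($A^{\star\star}=A$ for pseudo-cones, as remarked after Theorem~\ref{T3.1} via Lemma~\ref{L3.0}), the claimed identity $(K\cap H^-(u,0))^\star = K^\star+\R_{\ge 0}u$ is equivalent to $K\cap H^-(u,0) = (K^\star+\R_{\ge 0}u)^\star$. Using $\R_{\ge 0}u = (H^-(u,0))^\star$ — wait, $H^-(u,0)$ is not a pseudo-cone, so instead I would compute $(K^\star+\R_{\ge 0}u)^\star$ directly: $x$ lies in it iff $\langle x, y+\lambda u\rangle\le -1$ for all $y\in K^\star$ and all $\lambda\ge 0$. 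Letting $\lambda\to\infty$ forces $\langle x,u\rangle\le 0$, i.e. $x\in H^-(u,0)$; and then taking $\lambda=0$ gives $\langle x,y\rangle\le -1$ for all $y\in K^\star$, i.e. $x\in K^{\star\star}=K$. Conversely $x\in K\cap H^-(u,0)$ clearly satisfies both conditions. Hence $(K^\star+\R_{\ge 0}u)^\star = K\cap H^-(u,0)$, and applying $\star$ once more (legitimate since both sides are pseudo-cones) yields the claim.

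The main obstacle I anticipate is the bookkeeping around closedness and the pseudo-cone property of $K^\star+\R_{\ge 0}u$, which is needed before one is entitled to apply the involution $A^{\star\star}=A$; without that, the slick duality argument does not literally close. An alternative that sidesteps this is to argue the reverse inclusion $(K\cap H^-(u,0))^\star\subseteq K^\star+\R_{\ge 0}u$ by hand: given $x$ with $\langle x,y\rangle\le -1$ for all $y\in K\cap H^-(u,0)$, one wants to write $x = x' + \lambda u$ with $x'\in K^\star$, $\lambda\ge 0$. The candidate is $\lambda := \max\{0, \langle x,u\rangle/\langle u,u\rangle\}$ — actually one should choose $\lambda$ so that $x' := x-\lambda u$ satisfies $\langle x',y\rangle\le -1$ for all of $K$, not just the half. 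For $y\in K\cap H^-(u,0)$ this is immediate since $\langle -\lambda u,y\rangle\ge 0$. For $y\in K\setminus H^-(u,0)$ one uses a separation/limiting argument: points of $K$ with $\langle u,y\rangle>0$ can be pushed into $H^-(u,0)$ along the recession directions, or one invokes that $\rec K = C$ and $u\in C^\circ$... hmm, but $u$ need not be in $C^\circ$. This hand argument is the delicate part, so in the writeup I would prefer the duality route and simply include a short paragraph establishing that $K^\star+\R_{\ge 0}u$ is a closed pseudo-cone, using the separation theorem \cite[Thm.~1.3.8]{Sch14} and the fact that $\R_{\ge 0}u$ is a ray, exactly as in the proof of Lemma~\ref{L2.2} where closedness of unions was handled.
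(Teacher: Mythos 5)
Your first inclusion coincides with the paper's, and your computation $(K^\star+\R_{\ge 0}u)^\star=K\cap H^-(u,0)$ is correct; the biduality route is genuinely different from the paper, which instead takes $y\notin K^\star+\R_{\ge 0}u$, strongly separates the segment $[o,y]$ from $K^\star+\R_{\ge 0}u$, and checks that the separating normal vector lies in $K\cap H^-(u,0)$. But the point you dismiss as bookkeeping --- closedness of $A\coloneqq K^\star+\R_{\ge 0}u$, which you need before invoking $A^{\star\star}=A$ --- is a genuine gap, and your sketch does not close it: if $x_j+\lambda_j u\to z$ with $\lambda_j\to\infty$, the points $x_j$ escape to infinity in direction $-u$ and there is no ``suitable multiple of $u$'' to subtract. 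What Lemma \ref{L3.0} gives is only $A^{\star\star}={\rm shad\,cl\,conv}\,A={\rm cl}\,A$, so as written your argument proves $(K\cap H^-(u,0))^\star={\rm cl}(K^\star+\R_{\ge 0}u)$ and no more.

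Moreover, the gap cannot be filled by a cleverer closedness proof, because $A$ need not be closed under the stated hypotheses. In $\R^3$ let $M=\{(a,b,c): a>0,\ b\le-1,\ c\ge 1/a\}$, a pseudo-cone with recession cone the orthant $D=\{a\ge 0,\ b\le 0,\ c\ge 0\}$; put $K\coloneqq M^\star$ and $u\coloneqq(0,0,-1)$, so that $K^\star=M$ and $C={\rm rec}\,K=D^\circ$ is pointed and full-dimensional. A direct computation gives $K=\{y: y_1\le 0,\ y_3\le 0,\ y_2\ge\max(0,\,1-2\sqrt{y_1y_3})\}$, hence $K\cap H^-(u,0)=\{(y_1,y_2,0): y_1\le 0,\ y_2\ge 1\}\neq\emptyset$, while $K^\star+\R_{\ge 0}u=\{(a,b,c): a>0,\ b\le-1\}$ is not closed and $(K\cap H^-(u,0))^\star=\{(a,b,c): a\ge 0,\ b\le-1\}$ is its closure. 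So in the boundary case where $\langle u,\cdot\rangle\ge 0$ on $K$ with minimum $0$ (equivalently $-u\in C^\circ$), the identity itself only holds with a closure on the right-hand side; note that the paper's own proof tacitly presupposes the same closedness when it strongly separates $[o,y]$ from $K^\star+\R_{\ge 0}u$, so your instinct about where the delicacy sits is right. Your plan does go through whenever $K$ meets ${\rm int}\,H^-(u,0)$ (precisely the case $-u\notin C^\circ$): choose $y_0\in K$ with $\langle u,y_0\rangle<0$; then $\langle x_j+\lambda_j u,\,y_0\rangle\le-1+\lambda_j\langle u,y_0\rangle$ forces the $\lambda_j$ to be bounded for any convergent sequence in $A$, and closedness of $K^\star$ finishes the argument. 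To salvage the write-up, either add that hypothesis (or treat the case $-u\in C^\circ$ separately), or state the conclusion as $(K\cap H^-(u,0))^\star={\rm cl}(K^\star+\R_{\ge 0}u)$.
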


\begin{proof}
Let $y\in K^\star+\R_{\ge 0}u$ and $ y'=\lambda y$ with $\lambda\ge 1$. Then $y=x+\mu u$ with $x\in K^\star$ and $\mu\ge 0$. It follows that $y'=\lambda x+\lambda\mu u\in K^\star+\R_{\ge 0}u$. 

Let $y\in K^\star+\R_{\ge 0}u$ be as above, that is, $y=x+\mu u$ with $x\in K^\star$ and $\mu\ge 0$.  For $z\in K\cap H^-(u,0)$ we then have $\langle x,z\rangle\le -1$ and $\langle u,z\rangle\le 0$, hence $\langle y,z\rangle \le -1$. This means that $y\in (K\cap H^-(u,0))^\star$.

Conversely, suppose that $y\in\R^n$ and $y\notin K^\star+\R_{\ge 0}u$. The segment $[y,o]$ with endpoints $y$ and $o$ does not meet $K^\star+\R_{\ge 0}u$, by the property shown first. Therefore, $K^\star+\R_{\ge 0}u$ and $[y,o]$ can be strongly separated by a hyperplane. As in the proof of Lemma \ref{L3.0}, there is a vector $v\not=o$ such that $K^\star+\R_{\ge 0}u\subset{\rm int}\,H^-(v,-1)$ and $\langle y,v\rangle >-1$. Since $\langle x,v\rangle \le -1$ for all $x\in K^\star$, we have $v\in K^{\star\star}=K$. Let $x\in K^\star$. Since $\langle x+\lambda u,v\rangle \le -1$ for all $\lambda>0$, we have $\langle u,v\rangle\le 0$. 
This shows that $v\in K \cap H^-(u,0)$. Since $\langle y,v\rangle >-1$, this implies that $y\notin (K\cap H^-(u,0))^\star$, which completes the proof.
\end{proof}

Next, we remark that relation (\ref{3.3}) can be slightly strengthened in a useful way. For this, we define:

\begin{definition}
Let $K$ be a pseudo-cone. A pair $(x,v)\in\R^n\times\R^n$ is a {\em crucial pair} of $K$ if $x\in\partial K$ and $v$ is an outer normal vector of $K$ at $x$ such that  $\langle x,v\rangle =-1$.
\end{definition}

\begin{lemma}\label{L3.2}
If $(x,v)$ is a crucial pair of the pseudo-cone $K$, then $(v,x)$ is a crucial pair of $K^\star$.
\end{lemma}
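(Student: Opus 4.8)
The plan is to unwind the definition of ``crucial pair'' symmetrically. Recall that $(x,v)$ being a crucial pair of $K$ means three things: $x\in\partial K$, the vector $v$ is an outer normal vector of $K$ at $x$ (equivalently $\langle x',v\rangle\le\langle x,v\rangle$ for all $x'\in K$), and $\langle x,v\rangle=-1$. To prove the lemma I must establish the three corresponding facts for the pair $(v,x)$ relative to $K^\star$: that $v\in\partial K^\star$, that $x$ is an outer normal vector of $K^\star$ at $v$, and that $\langle v,x\rangle=-1$. The last is immediate since $\langle v,x\rangle=\langle x,v\rangle=-1$.

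First I would check that $v\in K^\star$. The outer-normal condition $\langle x',v\rangle\le\langle x,v\rangle=-1$ for all $x'\in K$ says precisely that $\langle v,x'\rangle\le -1$ for all $x'\in K$, which is exactly the membership condition $v\in K^\star$. Next I would verify that $x$ is an outer normal vector of $K^\star$ at $v$, i.e. $\langle y,x\rangle\le\langle v,x\rangle=-1$ for all $y\in K^\star$. But this is immediate from the definition of $K^\star$: every $y\in K^\star$ satisfies $\langle y,z\rangle\le -1$ for all $z\in K$, and in particular $\langle y,x\rangle\le -1$ since $x\in\partial K\subset K$. So $x$ supports $K^\star$ at the value $-1=\langle v,x\rangle$, and since $v\in K^\star$, this shows $x$ is an outer normal vector of $K^\star$ at the boundary point $v$. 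Finally, $v\in\partial K^\star$ follows because $v\in K^\star$ and $K^\star$ has a supporting hyperplane $H(x,-1)$ through $v$ (with $x\neq o$, as $\langle x,v\rangle=-1\neq 0$), so $v$ cannot be interior.

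Putting these together: $v\in\partial K^\star$, $x$ is an outer normal of $K^\star$ at $v$, and $\langle v,x\rangle=-1$, which is exactly the statement that $(v,x)$ is a crucial pair of $K^\star$. The argument is essentially a tautology once one notices the symmetry of the conditions in $x$ and $v$ built into the definition of the copolar set, together with the self-duality $K^{\star\star}=K$ noted after Lemma~\ref{L3.0}, which guarantees the argument ``closes up'' (applying it twice returns the original crucial pair of $K$).

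I do not anticipate a genuine obstacle here; the only point requiring a little care is the claim $v\in\partial K^\star$ rather than merely $v\in K^\star$. One must rule out $v\in\inn K^\star$, and the clean way is to exhibit the supporting hyperplane $H(x,-1)$ of $K^\star$ passing through $v$ and observe $x\ne o$ (forced by $\langle x,v\rangle=-1$), so $v$ lies on the boundary. Alternatively one could invoke the biduality $K=K^{\star\star}$ to transfer the boundary condition, but the direct supporting-hyperplane argument is shorter and avoids circularity.
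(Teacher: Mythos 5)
Your proposal is correct and follows essentially the same argument as the paper: unwind the outer-normal condition to get $v\in K^\star$, use the definition of $K^\star$ (the paper phrases this via $x\in K=K^{\star\star}$, which is the same inequality) to see that $H(x,-1)$ supports $K^\star$, and conclude $v\in\partial K^\star$ with $x$ an outer normal there. Your explicit remark that $x\neq o$ forces $v$ to be a boundary point is a slightly more careful rendering of a step the paper leaves implicit, but it is not a different route.
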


\begin{proof}
Let $(x,v)$ be a crucial pair of $K\in ps\C^n$. Since $v$ is an outer normal vector of $K$ at $x\in\partial K$, for all $y\in K$ we have $\langle y-x,v\rangle\le 0$ and hence $\langle y,v\rangle\le -1$. Therefore, $v\in K^\star$. Since $x\in K= K^{\star\star}$, we have $\langle z,x\rangle\le-1$ for all $z\in K^\star$, thus $\langle z-v,x\rangle\le 0$ for $z\in K^\star$. From $v\in K^\star$ it now follows that $v\in\partial K^\star$ and that $x$ is an outer normal vector of $K^\star$ at $v$. Thus $(v,x)$ is a crucial pair of $K^\star$. 
\end{proof}

Relation (\ref{3.3}) for $x\in{\rm int}\,C$ follows from Lemma \ref{L3.2}. Since the support function is homogeneous of degree $1$ and the radial function is homogeneous of degree $-1$, it suffices to consider an argument $x\in\partial K$. Let $v$ be such that $(x,v)$ is a crucial pair of $K$ and hence $(v,x)$ is a crucial pair of $K^\star$. Then $\rho(K,x)=1$ and $h(K^\star,x)=\langle v,x\rangle =-1$. 

Also useful is the following reformulation. Let $K$ be a $C$-pseudo-cone. For $x\in{\rm int}\,C$ we have $x\in\partial K \Leftrightarrow \rho(K,x)=1\Leftrightarrow h(K^\star,x)=-1$, hence
\begin{equation}\label{3.4}
x\in\partial K\cap {\rm int}\,C \Leftrightarrow H(x,-1) \mbox{ is a supporting hyperplane of }K^\star.
\end{equation}

An {\em exposed face} of a nonempty closed convex set is, by definition, the intersection of the set with one of its supporting hyperplanes. Exposed faces of convex bodies behave well under the ordinary polarity. The same holds true for copolarity of pseudo-cones, but here we must distinguish between different types of exposed faces. 

Let $K$ be a $C$-pseudo-cone. For an exposed face $F$ of $K$ we define the {\em conjugate face} of $F$ by
$$ \widehat F\coloneqq  \{x\in K^\star: \langle x,y\rangle=-1 \mbox{ for all }y\in F\}.$$
(One should keep in mind that $\widehat F$ depends not only on $F$ but also on $K$, which is not expressed by the notation.)

We note that, by definition, any exposed face $F$ of $K$ can be written in the form $F=K\cap H(u,t)$ with $K\subset H^-(u,t)$ and hence $u\in C^\circ$ and $t\le 0$. Here $t=0$ is only possible if $u\in\partial C^\circ$ (and hence $F\subset \partial C$), since otherwise $C$ could not be the recession cone of $K$. If $t<0$, we can normalize the vector $u$ so that $t=-1$. If $F=K\cap H(u,-1)$, then automatically $K\in H^-(u,-1)$, since it follows from the pseudo-cone property that any supporting hyperplane of $K$ not passing through $o$ strictly separates ${\rm int}\,K$ and $o$.

Therefore, we can also write
$$ \widehat F=\{u\in\R^n\setminus\{o\}: F=K\cap H(u,-1)\}.$$
In fact, for $u\in\R^n\setminus\{o\}$ we have
\begin{eqnarray*}
u\in\widehat F &\Leftrightarrow& u\in K^\star,\,\langle u,y\rangle =-1 \mbox{ for all }y\in F\\
&\Leftrightarrow& \langle u,z\rangle \le -1 \mbox{ for }z\in K,\, \langle u,y\rangle=-1 \mbox{ for }y\in F\\
&\Leftrightarrow&K\subset H^-(u,-1),\, F\subset H(u,-1)\\
&\Leftrightarrow& F=K\cap H(u,-1),
\end{eqnarray*}
from which the assertion follows.

If $F$ is unbounded and contained in $\partial C$, then $\widehat F=\emptyset$. Therefore, we have to distinguish between different types of exposed faces, and we consider the following sets. By $\F_b^{\,\rm in}(K)$ we denote the set of bounded exposed faces meeting the interior of $C$, and by $\F_b^\partial(K)$ the set of bounded exposed faces contained in the boundary of $C$. The set of unbounded exposed faces of $K$ meeting ${\rm int}\,C$ is denoted by $\F_u^{\,\rm in}(K)$. 

\begin{theorem}\label{T3.2} Let $K$ be a $C$-pseudo-cone, and let $F$ be an exposed face of $K$, which is either bounded or meets the interior of $C$. Then $\widehat F$ is an exposed face of $K^\star$, more precisely,
$$ F\in \F_b^{\,\rm in}(K)\Rightarrow \widehat F\in\F_b^{\,\rm in}(K^\star),$$
$$ F\in \F_b^\partial(K)\Rightarrow \widehat F\in \F_u^{\,\rm in}(K^\star),\qquad
F\in\F_u^{\,\rm in}(K)\Rightarrow \widehat F\in\F_b^\partial(K^\star).$$
Further, $\widehat{\widehat F} =F$ and $\dim F +\dim\widehat F=n-1$.
\end{theorem}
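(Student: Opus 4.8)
The plan is to treat the three implications in turn, using the characterization $\widehat F=\{u\in\R^n\setminus\{o\}: F=K\cap H(u,-1)\}$ established just before the statement, together with Lemma \ref{L3.2} on crucial pairs and the equivalence (\ref{3.4}) relating boundary points of $K$ in $\inn C$ to supporting hyperplanes $H(x,-1)$ of $K^\star$.

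First I would handle $F\in\F_b^{\,\rm in}(K)$. Write $F=K\cap H(u,-1)$ with $u\in\inn C^\circ$ (since $F$ meets $\inn C$, the normal cannot lie in $\partial C^\circ$). Pick $x\in F\cap\inn C$; then $(x,u)$ is a crucial pair of $K$, so by Lemma \ref{L3.2} $(u,x)$ is a crucial pair of $K^\star$, whence $u\in\partial K^\star$ and $u\in\inn C$ because $u\in\inn C^\circ$... wait — I need $u\in\inn C=\inn\rec K^\star$ (Lemma \ref{L3a} gives $\rec K^\star=C^\circ$? no: $\rec K^\star=(\rec K)^\circ=C^\circ$), so actually $\widehat F\subset K^\star$ has recession cone $C^\circ$, and I must show $\widehat F$ is a bounded exposed face of $K^\star$ meeting $\inn C^\circ$. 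The exposed-face claim: $\widehat F=K^\star\cap\bigcap_{y\in F}H(y,-1)$, and since $F$ is an exposed face, a single relative-interior point $y_0$ of $F$ suffices, so $\widehat F=K^\star\cap H(y_0,-1)$, which by (\ref{3.4}) is a genuine exposed face of $K^\star$ because $y_0\in\partial K\cap\inn C$. That $\widehat F$ meets $\inn C^\circ$: the crucial-pair argument gives $u\in\widehat F$, and $u\in\inn C^\circ$. Boundedness of $\widehat F$ I would get from $\dim\widehat F\le n-1-\dim F$ once the dimension formula is proved, or directly: if $\widehat F$ were unbounded its recession directions lie in $C^\circ$ but also satisfy $\langle\cdot,y\rangle\le 0$ for all $y\in F$; since $F$ spans (together with $o$) enough directions... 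I would argue that $\rec\widehat F=F^\circ\cap$ (something) forces $\rec\widehat F=\{o\}$ when $\dim\aff(F\cup\{o\})=n$, i.e. when $F\not\subset\partial C$ — this is the point where $F\in\inn C$ matters.

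For the two remaining implications I would exploit symmetry via $K^{\star\star}=K$: once $\widehat{\widehat F}=F$ is established, the implication $\F_u^{\,\rm in}(K)\to\F_b^\partial(K^\star)$ is the contrapositive-style converse of $\F_b^\partial(K)\to\F_u^{\,\rm in}(K^\star)$ applied to $K^\star$. So the real work is: (i) $\widehat{\widehat F}=F$, and (ii) $F\in\F_b^\partial(K)\Rightarrow\widehat F\in\F_u^{\,\rm in}(K^\star)$. For (i): $\widehat{\widehat F}=\{y: \widehat F=K^\star\cap H(y,-1)\}$; clearly $F\subseteq\widehat{\widehat F}$ since every $u\in\widehat F$ satisfies $\langle u,y\rangle=-1$ for $y\in F$ and $y\in K=K^{\star\star}$; the reverse inclusion uses that $F$ is exposed, so $F=K\cap H(u_0,-1)$ for $u_0\in\relint\widehat F$, and any $y\in\widehat{\widehat F}$ satisfies $\langle u_0,y\rangle=-1$ together with $y\in K$, hence $y\in F$. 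For (ii): here $F=K\cap H(u,-1)$ with $u\in\partial C^\circ$ and $F\subset\partial C$; I claim $\widehat F$ is unbounded — indeed $u\in\partial C^\circ$ means there is $w\in C$, $w\ne o$, with $\langle u,w\rangle=0$; then for any $v\in\widehat F$ one checks $v+\lambda w'\in\widehat F$ for suitable recession directions $w'$ of $C^\circ$ that annihilate $\aff F$, and the dimension count shows this space is nontrivial precisely because $\dim F\le n-2$ (as $F\subset\partial C$). Then $\widehat F\in\F_u^{\,\rm in}(K^\star)$ once I check $\widehat F\cap\inn C^\circ\ne\emptyset$, which follows by the crucial-pair argument applied to a point $x$ in the relative interior of $F$ inside $\partial C$: $(x,u)$ crucial for $K$ gives $(u,x)$ crucial for $K^\star$ with $u\in\widehat F$ — but $u\in\partial C^\circ$ only, so I instead need a crucial pair of $K$ whose normal lies in $\inn C^\circ$; such a normal comes from a point of $\widehat F$, so I would show directly that $\widehat F$, being a nonempty exposed face of the $C^\circ$-pseudo-cone $K^\star$ that is unbounded, must meet $\inn C^\circ$ (an unbounded face of a pseudo-cone with recession cone $C^\circ$ that were contained in $\partial C^\circ$ would have recession cone a face of $C^\circ$, forcing... — this needs the structure of $C^\circ$).

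The dimension formula $\dim F+\dim\widehat F=n-1$ I would prove last, in the normalized picture: after an affine argument, $\widehat F$ is contained in the affine subspace $\{u:\langle u,y_0\rangle=-1\}\cap\bigcap_{y\in F}\{u:\langle u,y-y_0\rangle=0\}$ whose dimension is $n-1-\dim F$ (using $o\notin\aff F$ for the bounded-in-$\inn C$ case, and a shifted version otherwise), giving $\dim\widehat F\le n-1-\dim F$; the reverse inequality follows by applying the same bound to $\widehat{\widehat F}=F$. The main obstacle I anticipate is the careful bookkeeping of when a face lies in $\partial C$ versus meeting $\inn C$ and the corresponding placement of normals in $\partial C^\circ$ versus $\inn C^\circ$ — i.e. correctly identifying which recession directions survive — rather than any single deep step; Lemma \ref{L3.2} and (\ref{3.4}) do most of the conceptual lifting, and the rest is the duality $K^{\star\star}=K$ combined with a linear-algebra dimension count.
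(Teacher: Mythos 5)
Your handling of the first implication and of $\widehat{\widehat F}=F$ is essentially sound, and your observation that $\widehat F=K^\star\cap H(y_0,-1)$ for a single point $y_0\in\relint F\cap\inn C$ is a clean alternative to the paper's route (which writes $\widehat F$ as an intersection of the exposed faces $\widehat{\{y\}}$, $y\in F\cap\inn C$, and quotes the fact that intersections of exposed faces are exposed). But there are genuine gaps. First, you place the normal vectors wrongly: it is \emph{boundedness} of $F$, not the fact that $F$ meets $\inn C$, that forces an exposing normal into $\inn C^\circ$ (if $u\in\partial C^\circ$ and $F=K\cap H(u,-1)\neq\emptyset$, choose $z\in C\setminus\{o\}$ with $\langle u,z\rangle=0$; then $F+\R_{\ge 0}z\subseteq F$, so $F$ is unbounded; conversely, unbounded faces meeting $\inn C$ do have normals in $\partial C^\circ$). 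Consequently, for $F\in\F_b^\partial(K)$ the exposing normal $u$ lies in $\inn C^\circ$, not in $\partial C^\circ$ as you assume; once this is corrected, ``$\widehat F$ meets $\inn C^\circ$'' is immediate from $u\in\widehat F$, and your unresolved detour (``this needs the structure of $C^\circ$\dots'') is unnecessary. Second, your argument for unboundedness of $\widehat F$ when $F\in\F_b^\partial(K)$ fails: what is needed is a \emph{nonzero vector of $C^\circ$ orthogonal to all of $F$}, and the dimension count ``$\dim F\le n-2$'' does not produce one, since a linear subspace of dimension $\ge 2$ need not meet the pointed cone $C^\circ$ in any nonzero vector. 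The paper obtains this vector from a supporting hyperplane of $C$ at a point of $\relint F$ (its set $G^\lhd$ of normals annihilating the face $G=C\cap H\supseteq F$); this idea is absent from your plan. Third, deducing $\F_u^{\,\rm in}(K)\Rightarrow\F_b^\partial(K^\star)$ as the ``converse'' of the second implication applied to $K^\star$ is not legitimate — converses are not free, and you never show $\widehat F\subset\partial C^\circ$. The direct argument is one line (a recession direction $z\in C\setminus\{o\}$ of $F$ satisfies $\langle u,z\rangle=0$ for every $u\in\widehat F$, so $u\in\partial C^\circ$), and boundedness of $\widehat F$ needs your recession argument with the equality $\langle z,y\rangle=0$ for all $y\in F$ (the inequality $\le 0$ you state is automatic and yields nothing) combined with ``$F$ meets $\inn C$''; the condition $\dim\aff(F\cup\{o\})=n$ is neither equivalent to $F\not\subset\partial C$ nor the relevant one, and a dimension bound on $\widehat F$ does not imply boundedness.

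Finally, your proof of the dimension formula is circular: applying the bound $\dim\widehat G\le n-1-\dim G$ to $G=\widehat F$ gives $\dim F=\dim\widehat{\widehat F}\le n-1-\dim\widehat F$, which is the same inequality $\dim F+\dim\widehat F\le n-1$ again, not the reverse one. The lower bound $\dim\widehat F\ge n-1-\dim F$ requires an independent input; the paper derives it from the existence of $n-k$ linearly independent normal vectors of the pseudo-cone at a $k$-dimensional face, and nothing in your plan substitutes for that step.
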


\begin{proof}
Let $F$ be an exposed face of $K$ that meets ${\rm int}\,C$, so that $F=K\cap H(u,-1)$ for a suitable $u$. Then $u\in \widehat F$ and hence $\widehat F\not=\emptyset$. Choose $y\in F\cap{\rm int}\,C$. Then $\rho(K,y)=1$, hence $h(K^\star,y)=-1$ by (\ref{3.3}). Thus, 
 the hyperplane $H(y,-1)$ supports $K^\star$. Hence, $\widehat{\{y\}} = K^\star\cap H(y,-1)$ is an exposed face of $K^\star$. It follows that
$$\widehat F=\bigcap_{y\in F} (K^\star\cap H(y,-1)) = \bigcap_{y\in F\cap{\rm int}\,C} (K^\star\cap H(y,-1)) = \bigcap_{y\in F\cap{\rm int}\,C} \widehat{\{y\}}.$$
Here we have used that for $\emptyset\not= M\subset\R^n$ we have $\bigcap_{y\in M}H(y,-1)=\bigcap_{y\in{\rm cl}\,M}H(y,-1)$. In fact, if $\langle x,y\rangle\not=-1$ for some $y\in{\cl}\,M$, then also $\langle x,y'\rangle\not=-1$ for some $y'\in M$. Thus, $\widehat F$ is an intersection of exposed faces and hence an exposed face of $K^\star$, by \cite[Thm. 2.1.3]{Sch14}. Assume first, in addition, that $F$ is bounded. Suppose that $\widehat F\subset\partial C^\circ$. Then there are unit vectors $u\in{\rm int}\,C$ for which $h(K^\star,u)$ comes arbitrarily close to zero. By (\ref{3.3}), the radial function of $F$ on unit vectors $u$ with $\rho(K,u)u\in F$ would then be unbounded, a contradiction. Hence, $\widehat F$ meets the interior of $C^\circ$. Now assume, second, that $F$ is unbounded. As remarked above, we can write $F=K\cap H(u,-1)$, and then $u\in\partial C^\circ$, since otherwise $C\cap H(u,-1)$ would be bounded. It follows that $\widehat F\subset\partial C^\circ$.

Finally, let $F=K\cap H(u,-1)$ be a bounded exposed face of $K$ that is contained in $\partial C$. We cannot have $u\in\partial C^\circ$, since that would imply that $F$ is either empty or unbounded. Thus, $u\in {\rm int}\,C^\circ$. Since $u\in \widehat F$, we see that $\widehat F$ is not empty and meets ${\rm int}\,C^\circ$. Let $p\in{\rm relint}\,F$ (the relative interior of $F$). Since $p\in\partial C$, there is a supporting hyperplane $H$ of $C$ with $p\in H$. Since $p\in{\rm relint}\,F$,  it follows that $F\subset H$. The intersection $G\coloneqq  C\cap H$ is an exposed face of $C$, and we have $F\subset G$. Let
$$ G^\lhd\coloneqq  \{y\in  C^\circ: \langle y,x\rangle =0\mbox{ for all } x\in G\}.$$
Choose $y_0\in\widehat F$. We state that
\begin{equation}\label{3.5}
G^\lhd + y_0\subset\widehat F.
\end{equation}
For the proof, let $z+y_0\in G^\lhd+y_0$. Then $z\in C^\circ$ (hence $\langle z,x\rangle\le 0$ for all $x\in C$) and $\langle z,x\rangle=0$ for all $x\in G$. Hence, for all $x\in K$ we obtain $\langle z+y_0,x\rangle\le -1$ and thus $z+y_0\in K^\star$, and for all $x\in F$ we obtain $\langle z+y_0,x\rangle =-1$ and thus $z+y_0\in\widehat F$. We have proved (\ref{3.5}), which implies that $\widehat F$ is unbounded.

To prove that $\widehat{\widehat F} =F$, let $y\in F$. Then $\langle x,y\rangle =-1$ for all $x\in\widehat F$. Since $y\in K= K^{\star\star}$, it follows that $y\in \widehat{\widehat F}$. Thus $F\subseteq \widehat{\widehat F}$. Each exposed face $F$ of $K$, except those which are unbounded and contained in $\partial C$, can be written as $F=K\cap H(u,-1)$, where $u\in\widehat F$. Let $z\in\widehat{\widehat F}$. Then $z\in K^{\star\star}=K$ and $\langle z,x\rangle =-1$ for all $x\in\widehat F$, in particular for $u$. Thus $\langle z,u\rangle =-1$, hence $z\in F$. This shows that $\widehat{\widehat F}\subseteq F$.

Suppose that $F$ is of dimension $k\in\{0,\dots,n-1\}$. Then there are $k+1$ affinely independent points $y_1,\dots,y_{k+1} \in F\cap C$. By Lemma \ref{L3.2}, the linearly independent vectors $y_1,\dots,y_{k+1}$ are normal vectors of supporting hyperplanes of $P^\star$ containing $\widehat F$. It follows that $\dim\widehat F\le n-1-k$. Conversely, $P$ has $n-k$ linearly independent normal vectors at $F$, hence $\widehat F$ contains $n-k$ affinely independent points, which shows that $\dim \widehat F\ge n-1-k$. We have proved that $\dim \widehat F+\dim F= n-1$.
\end{proof}

\section{Polyhedral pseudo-cones}\label{sec4}

A pseudo-cone is {\em polyhedral} if it is the intersection of finitely many closed halfspaces. First we want to describe the copolar set of a polyhedral pseudo-cone $P\in ps\C^n$. We have 
$$ P = \bigcap_{i=1}^k H^-(u_i,t_i) \cap \bigcap_{j=1}^m H^-(v_j,0),$$
with unit vectors $u_i,v_j$, numbers $t_i<0$ and integers $k\ge 1$ and $m\ge 0$. It follows from (\ref{3.1}) and Lemma \ref{L3.1} that
$$ P^\star ={\rm conv}\left(\bigcup_{i=1}^k \R_{\ge 1/|t_i|}u_i\right)+ \sum_{j=^1}^m \R_{\ge 0}v_j.$$
In particular, $P^\star$ is also polyhedral.

The proper faces of a polyhedral pseudo-cone are all exposed faces. For polyhedral pseudo-cones, we restate Theorem \ref{T3.2} in the following way. That the mappings are inclusion-reversing, follows from the definition of $\widehat F$. The involution property means that $\widehat{\widehat F}=F$. 
\begin{theorem}\label{T4.1}
Let $P\in ps\C^n$ be a polyhedral pseudo-cone whose recession cone is pointed and $n$-dimensional. \\[1mm]
The mapping $F\mapsto\widehat F$, restricted to $\F_b^{\,\rm in}(P)$,  is an inclusion-reversing involution onto $\F_b^{\,\rm in}(P^\star)$. \\[1mm]
The mapping $F\mapsto\widehat F$, restricted to $\F_b^\partial(P)$, is an inclusion-reversing involution onto $\F_u^{\,\rm in}(P^\star)$.\\[1mm]
The mapping $F\mapsto\widehat F$, restricted to  $\F_u^{\,\rm in}(P)$, is an inclusion-reversing involution onto $\F_b^\partial(P^\star)$.
\end{theorem}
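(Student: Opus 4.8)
The plan is to obtain Theorem~\ref{T4.1} as a formal consequence of Theorem~\ref{T3.2}, invoked once for $P$ and once for $P^\star$. First I would assemble the structural facts about the copolar. Since $P$ is polyhedral, the formula for $P^\star$ established above shows that $P^\star$ is polyhedral as well, and by Lemma~\ref{L3a} we have $\rec P^\star=(\rec P)^\circ=C^\circ$; because $C$ is pointed and $n$-dimensional, so is $C^\circ$. Hence $P^\star$ is an admissible pseudo-cone with $C^\circ$ in the role played by $C$, and Theorem~\ref{T3.2} applies to it, its proof using only that the cone is pointed and full-dimensional. I would also recall that every proper face of a polyhedron is exposed, so for $P$ and for $P^\star$ the sets $\F_b^{\,\rm in}$, $\F_b^\partial$, $\F_u^{\,\rm in}$ are simply families of faces; together they account for all proper faces except the unbounded ones lying in $\partial C$ (respectively $\partial C^\circ$), which are exactly the faces with empty conjugate.

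That the three maps $F\mapsto\widehat F$ are inclusion-reversing is immediate from the definition of $\widehat F$: if $F_1\subseteq F_2$, then any $x\in P^\star$ with $\langle x,y\rangle=-1$ for all $y\in F_2$ satisfies the same for all $y\in F_1$, so $\widehat{F_2}\subseteq\widehat{F_1}$. For the rest, Theorem~\ref{T3.2} applied to $P$ gives the three inclusions $F\in\F_b^{\,\rm in}(P)\Rightarrow\widehat F\in\F_b^{\,\rm in}(P^\star)$, $F\in\F_b^\partial(P)\Rightarrow\widehat F\in\F_u^{\,\rm in}(P^\star)$, $F\in\F_u^{\,\rm in}(P)\Rightarrow\widehat F\in\F_b^\partial(P^\star)$, together with $\widehat{\widehat F}=F$. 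Applying the same theorem to $P^\star$ and using $P^{\star\star}=P$ yields the reverse inclusions, e.g. $G\in\F_b^{\,\rm in}(P^\star)\Rightarrow\widehat G\in\F_b^{\,\rm in}(P)$, again with $\widehat{\widehat G}=G$. Combining the two: for every $G\in\F_b^{\,\rm in}(P^\star)$ one has $G=\widehat{\widehat G}$ with $\widehat G\in\F_b^{\,\rm in}(P)$, so $F\mapsto\widehat F$ restricted to $\F_b^{\,\rm in}(P)$ maps onto $\F_b^{\,\rm in}(P^\star)$; moreover $\widehat{\widehat F}=F$ forces this restriction to be injective and to be its own inverse. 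The statements for $\F_b^\partial$ and $\F_u^{\,\rm in}$ follow in exactly the same way.

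I do not expect a real obstacle here, since everything reduces to Theorem~\ref{T3.2} plus the observation that $\rec P^\star=C^\circ$ is again a pointed, full-dimensional cone. The only point deserving a word of care is notational consistency under $\star$: the families $\F_b^\partial(P^\star)$ and $\F_u^{\,\rm in}(P^\star)$ must be read relative to $\rec P^\star=C^\circ$, not relative to $C$, and correspondingly $\F_b^{\,\rm in}(P^{\star\star})=\F_b^{\,\rm in}(P)$ is read relative to $C$; this matches precisely the way Theorem~\ref{T3.2} is phrased, so no additional argument is needed.
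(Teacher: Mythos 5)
Your proposal is correct and follows essentially the paper's own route: Theorem~\ref{T4.1} is obtained as a restatement of Theorem~\ref{T3.2} for polyhedral pseudo-cones, using that all proper faces of a polyhedron are exposed, that inclusion-reversal is immediate from the definition of $\widehat F$, and that the involution property is $\widehat{\widehat F}=F$. Your explicit second application of Theorem~\ref{T3.2} to $P^\star$ (justified by $\rec P^\star=C^\circ$ being pointed and full-dimensional) to get surjectivity is exactly what the paper leaves implicit, so there is nothing substantively different to compare.
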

We remark that if $P$ is a polyhedral $C$-pseudo-cone (so that also $C$ is polyhedral), then the totality of faces of $P^\star$ can be obtained from \cite[Lem. 1.5.4]{Sch22}. Since $C^\circ$ is the recession cone of $P^\star$, it implies that
$$ P^\star= \left(\bigcup_{F\in \F_b(P^\star)} F\right) + C^\circ,$$
where $\F_b(P)$ denotes the set of bounded faces of a polyhedral set $P$. Hence, each face of $P^\star$ is the sum of suitable faces $F\in\F_b(P^\star)$ and $G\in\F(C^\circ)$. The elements of $\F(C^\circ)$ are the normal cones of the faces of $C$.

The local geometry of polyhedral sets, beyond faces, is determined by normal cones and angle cones. They can be described as follows. Suppose that $P=\bigcap_{i=1}^m H_i^-$ with closed halfspaces $H_1^-,\dots, H_m^-$ and that $F$ is a face of $P$. Without loss of generality, let $H_1^-,\dots,H_k^-$ be the halfspaces that contain $F$ in their boundaries, and let $u_1,\dots,u_k$ be the outer unit normal vectors of these halfspaces. Then the normal cone of $P$ at $F$ is given by
$$ N(P,F)= {\rm pos}\{u_1,\dots,u_k\},$$
where ${\rm pos}$ denotes the positive hull. Further, with an arbitrary $z\in{\rm relint}\,F$, the angle cone of $P$ at $F$ can be defined by 
$$ A(F,P)\coloneqq  \bigcap_{i=1}^k(H_i^--z) = {\rm pos}(P-z).$$
We have $N(P,F)= A(F,P)^\circ$. (For more on these cones, see \cite[Sec. 1.4]{Sch22}.)

If now $P$ is a polyhedral $C$-pseudo-cone and $F\in \F_b(P)\cup \F_u^{\,\rm in}(P)$ is a face of $P$, we have
$$ N(P^\star,\widehat F)= {\rm pos}\,F.$$
This can be deduced from (\ref{3.4}), which remains true if the left-hand side is replaced by $x\in{\rm cl}(\partial K\cap {\rm int}\,C)$. It follows that also
$$ A(\widehat F, P^\star) = ({\rm pos}\,F)^\circ.$$

With the notation used above, the set
$$ T(F,P)\coloneqq  {\rm pos}(P-z)+z =A(F,P) +z$$
with $z\in{\rm relint}\,F$ is known as the tangent cone of $P$ at $F$ (although, strictly speaking, it is the translate of a cone). Thus, $T(F,P)$ is the intersection of all supporting halfspaces of $P$ that contain the face $F$ in their boundary. We have
$$ T(\widehat F,P^\star)= ({\rm shad}\,F)^\star.$$
This can also be deduced from (\ref{3.4}).

\section{Estimates for $C$-close sets}\label{sec5}

A $C$-close set is, by definition, a $C$-pseudo-cone $K$ with $V_n(C\setminus K)<\infty$. It is to be expected that the support function of such a set approaches zero in a controllable way when the boundary of $\Omega_{C^\circ}$ is approached. The following theorem makes this precise. Since we may apply a dilatation, it suffices to consider $C$-close sets $K$ with $V_n(C\setminus K)=1$. Recall that $\delta_C(u)$ denotes the spherical distance of $u\in\Omega_{C^\circ}$ from the boundary of $\Omega_{C^\circ}$.

\begin{theorem}\label{T5.1}
Choose a number $0<\alpha_0<\pi/2$. There is a constant $c_1$, depending only on $C$ and $\alpha_0$, such that every $C$-close set $K$ with $V_n(C\setminus K)=1$ satisfies
$$ \overline h_K(u) \le c_1\delta_C(u)^{1/n}\quad\mbox{for }u\in\Omega_{C^\circ}\mbox{ with }\delta_C(u)\le \alpha_0.$$
\end{theorem}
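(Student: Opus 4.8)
The plan is to derive the bound from a volume estimate: if $\overline h_K(u)$ were large at some $u$ with small $\delta_C(u)$, then $C\setminus K$ would contain a region of volume exceeding $1$. Fix $u\in\Omega_{C^\circ}$ with $\delta_C(u)\le\alpha_0$ and set $h\coloneqq \overline h_K(u)>0$, so the supporting hyperplane $H(u,-h)$ of $K$ has the property that the slab-like region $C\cap H^-(u,-h)\cap H^-(\mathfrak v,t)$ (for suitable $t$) lies entirely in $C\setminus K$, because $K\subset H^-(u,-h)$ means $K$ avoids $\{x\in C:\langle x,u\rangle>-h\}$. Wait — more carefully, $h(K,u)=-h$ means $\langle x,u\rangle\le -h$ for all $x\in K$, so the set $R_u\coloneqq \{x\in C:\langle x,u\rangle>-h\}$ is disjoint from $K$, hence $R_u\subseteq C\setminus K$ and therefore $V_n(R_u)\le V_n(C\setminus K)=1$. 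The whole argument reduces to the geometric claim
$$
V_n\bigl(\{x\in C:\langle x,u\rangle>-h\}\bigr)\ge c\, h^n/\delta_C(u)
$$
for a constant $c=c(C,\alpha_0)>0$, since this immediately gives $h^n\le c^{-1}\delta_C(u)$, i.e. $\overline h_K(u)\le c_1\delta_C(u)^{1/n}$ with $c_1=c^{-1/n}$.

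To prove that volume lower bound, first I would reduce to understanding $\{x\in C:\langle x,u\rangle>-h\}$ near the boundary ray of $C$ that is ``responsible'' for $\delta_C(u)$. Since $u\in\inn C^\circ$ and $\delta_C(u)=\min\{\angle(u,v):v\in\partial\Omega_{C^\circ}\}$, there is a boundary direction $v_0\in\partial C^\circ\cap S^{n-1}$ with $\angle(u,v_0)=\delta_C(u)$, and $v_0$ is the outer normal of a supporting hyperplane of $C$, touching $C$ along a face whose relative interior contains some unit vector $e\in\partial C$ with $\langle e,v_0\rangle=0$. Because $\langle u,e\rangle$ is small of order $\delta_C(u)$ but $u$ still points strictly into $C^\circ$, the point $\lambda e$ for $\lambda$ of order $h/\delta_C(u)$ satisfies $\langle\lambda e,u\rangle$ close to $0>-h$, so $\lambda e$ and a whole neighborhood of it inside $C$ lie in the region $\{x\in C:\langle x,u\rangle>-h\}$. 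The set $C$ contains, near $e$, a fixed cone of directions (depending only on $C$), so it contains a ball of radius $\sim\lambda$ around $\lambda e$ that stays inside $C$; intersecting with the halfspace $\langle x,u\rangle>-h$ still leaves a region of volume of order $\lambda^{n-1}\cdot h=(h/\delta_C(u))^{n-1}h=h^n/\delta_C(u)^{n-1}$. Hmm — this gives $h^n/\delta_C(u)^{n-1}$ rather than $h^n/\delta_C(u)$; that would yield the even stronger bound $\overline h_K(u)\le c_1\delta_C(u)^{(n-1)/n}$. Since $\delta_C(u)\le\alpha_0<\pi/2$, the exponent $1/n$ claimed in the theorem is weaker, so any such estimate suffices; I will aim for whichever power drops out most cleanly and simply record $\delta_C(u)^{1/n}$ as the (sufficient) conclusion.

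The quantitative input about $C$ that makes the constants depend only on $C$ and $\alpha_0$ is: there exists $r_0=r_0(C)>0$ such that for every unit $e\in\partial C$ lying in the relative interior of a facet of $C$ (or more robustly, for a fixed such $e$, or for the full set $\partial C$ via compactness) and every $s>0$, the ball $B(se,r_0 s)\cap\aff(\text{that facet})$-side is contained in $C$; combined with the bound $|\langle u,x\rangle|\le\|x\|\cdot\angle(u,v_0)$-type estimates and the restriction $\delta_C(u)\le\alpha_0$ keeping $u$ uniformly away from $\mathfrak v^\perp$ issues, all constants are uniform. The main obstacle is organizing this ``inscribed cone at the boundary of $C$'' argument cleanly: one must choose the scale $\lambda\sim h/\delta_C(u)$ so that the inscribed ball of radius $\sim r_0\lambda$ around $\lambda e$ both stays in $C$ and lies in the open halfspace $\langle\cdot,u\rangle>-h$, which forces checking that $\langle\lambda e,u\rangle+r_0\lambda>-h$, i.e. that $\lambda(\langle e,u\rangle+r_0)>-h$; since $\langle e,u\rangle$ can be slightly negative (of size $\lesssim\delta_C(u)$) but $r_0$ is a fixed positive constant, for $\delta_C(u)$ small enough (i.e. $\delta_C(u)\le\alpha_0$ with $\alpha_0$ possibly shrunk depending on $r_0$, $C$) this holds and the computation goes through; handling the case $\alpha_0$ not small is trivial since then $\delta_C(u)^{1/n}$ is bounded below and \eqref{2.1} gives $\overline h_K\le b(K)$, with $b(K)$ controlled by $V_n(C\setminus K)=1$ via a separate easy volume argument.
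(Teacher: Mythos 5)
Your reduction is exactly the one the paper uses: since $K\subset H^-(u,-h)$ with $h=\overline h_K(u)$, the set $\{x\in C:\langle x,u\rangle>-h\}$ lies in $C\setminus K$, so everything hinges on the lower bound $V_n(\{x\in C:\langle x,u\rangle>-h\})\ge c\,h^n/\delta_C(u)$, which by homogeneity is the paper's inequality $V_n(C\cap H^+(u,-1))\ge c_4/\delta_C(u)$. The gap is in your proof of this volume bound. First, no ball of radius $\sim\lambda$ centered at $\lambda e$ can lie in $C$, since $\lambda e\in\partial C$; and when you repair this by offsetting into the interior, the inequality you check has the wrong sign: for a ball of radius $r_0\lambda$ to stay in the halfspace $\{\langle\cdot,u\rangle>-h\}$ you need $\langle\lambda e,u\rangle-r_0\lambda>-h$, i.e.\ $\lambda(r_0+|\langle e,u\rangle|)<h$, which forces $\lambda\lesssim h$, not $\lambda\sim h/\delta_C(u)$. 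The region is long (length $\sim h/\delta_C(u)$) only along the boundary direction $w$; transversally it has thickness only $\sim h$. Consequently your claimed bound $h^n/\delta_C(u)^{n-1}$, and the ``even stronger'' conclusion $\overline h_K(u)\le c_1\delta_C(u)^{(n-1)/n}$, are actually false for $n\ge 3$: take $C$ the nonnegative octant in $\R^3$, $u=(-\tfrac{\cos\alpha}{\sqrt2},-\tfrac{\cos\alpha}{\sqrt2},-\sin\alpha)$, so $\delta_C(u)=\alpha$; then $\{x\in C:\langle x,u\rangle>-h\}$ is a simplex of volume $\approx h^3/(3\alpha)$, of order $h^n/\delta_C(u)$ and not $h^n/\delta_C(u)^{n-1}$, and $K=C\cap H^-(u,-h)$ with $h\asymp\alpha^{1/3}$ shows the exponent $1/n$ in the theorem is sharp. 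So the ``whichever power drops out'' step is not a cosmetic choice: your construction as written does not produce the needed estimate at all.

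What is missing is a construction that exploits the length $\sim h/\delta_C(u)$ in the single direction $w$ while keeping only thickness $\sim h$ transversally. The paper does this by taking the convex hull of the point $x_\alpha\in\R_{\ge 0}w$ (at distance $\asymp 1/\alpha$ after normalizing $h=1$) with the cross-section $A(v,w,\alpha)=C\cap H(w,1)\cap H^+(u_\alpha,-1)$, whose $(n-1)$-volume is bounded below uniformly in the orthogonal pair $(v,w)$ by a compactness/continuity argument; this cone has volume $\ge c_4/\alpha$. A salvage closer to your picture would be a ``tube'': balls of radius $\epsilon_0 ch$ centered at $tw+ch\,\mathfrak{v}'$ (with $\mathfrak{v}'\in{\rm int}\,C$ fixed, $c$ small) for $0\le t\le h/(2\sin\delta_C(u))$, each contained in $C$ and in the halfspace, giving volume $\gtrsim h^n/\delta_C(u)$. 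Either way, the key quantitative step of the theorem still has to be supplied; the rest of your outline (the reduction, the uniformity discussion, and the trivial case of $\delta_C(u)$ bounded below via $\overline h_K\le b(K)$ and $b(K)^nV_n(C\cap B^n)\le 1$) is fine and parallels the paper.
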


\begin{proof}
 We start with a pair $v\in\partial C^\circ$ and $w\in\partial C$ of orthogonal unit vectors (note that to any unit vector $v\in\partial C^\circ$ there is an orthogonal
unit vector $w\in\partial C$, as follows from the Moreau decomposition; see, e.g., \cite[Thm. 1.3.3]{Sch22}). The vectors $v$ and $w$ span a two-dimensional linear subspace, which we denote by $E$. For each $\alpha\in (0,\alpha_0]$ there is a unique unit vector $u_\alpha$ such that $v\in{\rm pos}\,\{w,u_\alpha\}$ and $\angle(v,u_\alpha)=\alpha$. The hyperplane $H(u_\alpha,-1)$ intersects the ray $\R_{\ge 0}w$ in a point $x_\alpha$. The hyperplane $H(u_\alpha,-1)$ touches the unit sphere $S^{n-1}$ in a point $z_\alpha$, and the ray $w-\R_{\ge 0}v$ intersects the hyperplane $H(u_\alpha,-1)$ in a point $y_\alpha$. We define
$$ a_1=a_1(\alpha)\coloneqq  \|w-y_\alpha\|,\quad a_2=a_2(\alpha)\coloneqq  \|w-x_\alpha\|.$$
The points $o,v,w,x_\alpha,y_\alpha,z_\alpha$ all lie in $E$, and the right-angled triangle with vertices $w,x_\alpha,y_\alpha$ has at $x_\alpha$ the angle $\alpha$, hence
$$ a_1=a_2\tan\alpha,\quad 1= (a_2+1)\sin\alpha,\quad a_1= \frac{1-\sin\alpha}{\cos\alpha}.$$
For $\alpha\in(0,\alpha_0]$ it follows that
$$ a_1(\alpha)\ge a_1(\alpha_0)\eqqcolon c_2.$$

In the following, we write
$$ H^+(u_\alpha,-1)\coloneqq \{x\in\R^n: \langle x,u_\alpha\rangle\ge -1\}$$
and consider the $(n-1)$-dimensional convex set
$$ A(v,w,\alpha)\coloneqq  C\cap H(w,1)\cap H^+(u_\alpha,-1).$$
Regarding the positive function $(v,w)\mapsto V_{n-1}(A(v,w,\alpha_0))$, defined on orthogonal pairs of unit vectors $v\in\partial C^\circ$, $w\in\partial C$, we see from continuity considerations that it cannot come arbitrarily close to $0$, hence there exists a constant $c_3>0$, depending only on $C$ and $\alpha_0$, such that $V_{n-1}(A(v,w,\alpha_0))\ge c_3$.
For $\alpha\in(0,\alpha_0]$ we have $A(v,w,\alpha)\supseteq A(v,w,\alpha_0)$, hence 
$$ V_{n-1}(A(v,w,\alpha))\ge c_3.$$
The set $C\cap H^+(u_\alpha,-1)$ contains the convex hull of $x_\alpha$ and $A(v,w,\alpha)$, hence
\begin{equation}\label{5.1} 
V_n(C\cap H^+(u_\alpha,-1)) \ge \frac{1}{n} a_2V_{n-1}(A(v,w,\alpha_0)) \ge \frac{c_3}{n} \frac{1-\sin\alpha}{\sin\alpha} \ge \frac{c_4}{\alpha},
\end{equation}
with a constant $c_4$ depending only on $C$ (which implies the dependence on $n$) and $\alpha_0$.

Now we start with an arbitrary point $u_\alpha\in \Omega_{C^\circ}$ with $\delta_C(u_\alpha)=\alpha\le \alpha_0$. Let $v\in\partial \Omega_{C^\circ}$ be a point with smallest spherical distance from $u_\alpha$, so that $\angle(u_\alpha,v)=\alpha$. Let $w$ be the unit tangent vector of the circular arc connecting $u_\alpha$ and $v$, oriented so that it points away from $u_\alpha$. Then $w$ is an outer normal vector of a supporting hyperplane to $C^\circ$ at $v$, since otherwise there would be points in $\partial \Omega_{C^\circ}$ closer (in spherical distance) to $u_\alpha$ than $v$. It follows that $w\in\partial C$. 

The supporting hyperplane $H(K,u_\alpha)$ of $K$ with outer normal vector $u_\alpha$ has distance $\overline h_K(u_\alpha)$ from the origin. Further, denoting by $H^+(K,u_\alpha)$ the closed halfspace bounded by $H(K,u_\alpha)$ and not containing $K$, we have
$$V_n(C\cap H^+(K,u_\alpha))\le V_n(C\setminus K)=1.$$
Applying, to a suitable situation considered above, the dilatation with factor $\overline h_K(u_\alpha)$, we see that
$$ \overline h_K(u_\alpha)^n V_n(C\cap H^+(u_\alpha,-1))\le 1.$$
Together with (\ref{5.1}), this yields $\overline h_K(u_\alpha)^nc_4/\alpha\le 1$, and since $\alpha=\delta_C(u_\alpha)$, this proves the assertion.
\end{proof}

From this theorem and the subsequent lemma, we can draw a conclusion about the convergence of $C$-close sets. For $\tau>0$, we write 
\begin{equation}\label{5.0}
\overline\omega(\tau)\coloneqq \{u\in\Omega_{C^\circ}: \delta_C(u)\ge \tau\}.
\end{equation}

\begin{lemma}\label{L5.1}
If $K$ is a $C$-pseudo-cone and $\tau>0$, then the reverse spherical image ${\bm x}_K(\overline\omega(\tau))$ satisfies
$$ {\bm x}_K(\overline\omega(\tau)) \subset \frac{b(K)}{\sin\tau}B^n.$$
\end{lemma}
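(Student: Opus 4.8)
The plan is to bound the norm of any point $x\in{\bm x}_K(\overline\omega(\tau))$ directly in terms of $b(K)$ and the supporting hyperplane data. First I would fix $x\in{\bm x}_K(\overline\omega(\tau))$; by definition there is $u\in\overline\omega(\tau)$, so $u\in\Omega_{C^\circ}$ with $\delta_C(u)\ge\tau$, such that $H(K,u)$ is a supporting hyperplane of $K$ at $x$. Thus $\langle x,u\rangle=h(K,u)=-\overline h_K(u)$, and by \eqref{2.1} we have $\overline h_K(u)\le b(K)$. This gives $|\langle x,u\rangle|\le b(K)$, i.e. the component of $x$ in the direction $u$ is controlled; the issue is to control the component of $x$ orthogonal to $u$, which is where the recession cone and the quantity $\delta_C(u)$ enter.

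The key geometric point is that $x\in K\subseteq C$, so $x$ lies in the cone $C$, while $u\in{\rm int}\,C^\circ$ lies well inside the polar cone. Write $x=\|x\|\,\xi$ with $\xi\in S^{n-1}\cap C$. Since $\langle\xi,u\rangle\le 0$ (as $\xi\in C$, $u\in C^\circ$), the angle $\angle(\xi,-u)$ is at most $\pi/2$; more precisely I claim $\angle(-u,\xi)\ge\delta_C(u)$. Indeed, $-u$ has spherical distance $\delta_C(u)$ from $\partial\Omega_{C^\circ}=S^{n-1}\cap\partial C^\circ$ after the antipodal identification — equivalently, the spherical cap of radius $\delta_C(u)$ about $u$ lies in ${\rm int}\,C^\circ$, so every $\xi\in S^{n-1}\cap C$ satisfies $\langle\xi,u'\rangle\le 0$ for all $u'$ in that cap, which forces $\langle\xi,u\rangle\le -\sin\delta_C(u)\,\|\,\xi_{\perp u}\|$-type estimates. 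Cleanly: for $\xi\in C\cap S^{n-1}$, the angle between $\xi$ and any ray in ${\rm int}\,C^\circ$ exceeds $\pi/2$ by at least the spherical inradius-type margin, giving $\langle\xi,-u\rangle\ge\sin\delta_C(u)\ge\sin\tau$ when $\xi$ is suitably aligned — but in general one only gets that $-u$ makes angle $\ge\pi/2+\delta_C(u)$ with $\xi$... I need the reverse. The correct statement is: since the cap $B(u,\delta_C(u))\subseteq C^\circ$, for $\xi\in C$ we get $\langle\xi,w\rangle\le 0$ for all $w\in B(u,\delta_C(u))\cap S^{n-1}$; taking $w$ to be the point of this cap closest to $\xi$ in angle shows $\angle(\xi,u)\ge\pi/2+\delta_C(u)$, hence $\langle\xi,u\rangle=-\sin(\angle(\xi,u)-\pi/2)\le -\sin\delta_C(u)$ when $\angle(\xi,u)\le\pi$, i.e. $|\langle\xi,u\rangle|\ge\sin\delta_C(u)\ge\sin\tau$.

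Combining the two bounds: $b(K)\ge|\langle x,u\rangle|=\|x\|\,|\langle\xi,u\rangle|\ge\|x\|\sin\tau$, whence $\|x\|\le b(K)/\sin\tau$, which is exactly the claimed inclusion ${\bm x}_K(\overline\omega(\tau))\subset(b(K)/\sin\tau)B^n$. The main obstacle is the trigonometric/geometric lemma in the previous paragraph — establishing that $\delta_C(u)\ge\tau$ forces $|\langle\xi,u\rangle|\ge\sin\tau$ for every unit vector $\xi\in C$. I expect this to follow from the definition of $\delta_C$ together with the duality $C^{\circ\circ}=C$: the condition that the spherical ball of radius $\delta_C(u)$ around $u$ stays in ${\rm int}\,C^\circ$ means that if $\langle\xi,u\rangle>-\sin\delta_C(u)$ for some $\xi\in C\cap S^{n-1}$, one can rotate $u$ toward $-\xi$ by an angle less than $\delta_C(u)$ and reach a vector $u'$ with $\langle\xi,u'\rangle>0$, contradicting $u'\in C^\circ$ and $\xi\in C$; hence the border case $\langle\xi,u\rangle=-\sin\delta_C(u)$ is the extremal one and the inequality holds. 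Everything else is a one-line application of \eqref{2.1} and the inclusion $K\subseteq C$.
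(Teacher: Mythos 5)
Your proposal is correct and follows essentially the same route as the paper: both combine $|\langle x,u\rangle|=\overline h_K(u)\le b(K)$ from \eqref{2.1} with the geometric fact that a unit vector $\xi\in C$ makes angle at least $\pi/2+\delta_C(u)\ge\pi/2+\tau$ with $u$, hence $|\langle \xi,u\rangle|\ge\sin\tau$, and you merely spell out via the spherical-cap argument what the paper asserts directly. (One minor slip: in your final paragraph the rotation of $u$ should be \emph{toward} $\xi$, not $-\xi$; your ``closest point of the cap'' formulation is the correct one.)
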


\begin{proof}
Let $K$ be a $C$-pseudo-cone and $x\in{\bm x}_K(\overline\omega(\tau))$, let $u\in\overline\omega(\tau)$ be an outer normal vector of $K$ at $x$. Let $x'$ be such that $x=\|x\|x'$.
Then
$$ \|x\| |\langle x',u\rangle| =|\langle x,u\rangle|=\overline h (K,u)\le b(K),$$
by (\ref{2.1}). If $\gamma$ denotes the angle between $x'$ and $u$, we have $\gamma\ge (\pi/2)+\alpha+\beta$ with $\alpha\ge \tau$ and $\beta\ge 0$. This gives
$$ \langle x',u\rangle = \cos\gamma \le -\sin(\alpha+\beta) \le -\sin \tau.$$
Thus, we get
$$ \|x\|\le \frac{b(K)}{|\langle x',u\rangle|} \le \frac{b(K)}{\sin \tau}$$
and, therefore, the assertion.
\end{proof}

In the following lemma, a well-known property of the convergence of convex bodies is carried over to $C$-close pseudo-cones.

\begin{lemma}\label{L5.2}
Suppose that $(K_i)_{i\in\N}$ is a sequence of $C$-close sets with $V_n(C\setminus K) \le 1$, converging to a $C$-close set $K_0$. Then the sequence $(\overline h_{K_i})_{i\in\N}$ converges uniformly to $\overline h_{K_0}$.
\end{lemma}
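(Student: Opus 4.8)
The statement is the $C$-pseudo-cone analogue of the classical fact that Hausdorff convergence of convex bodies implies uniform convergence of support functions. I would reduce it to that classical fact via the truncations $C^-(t)$. The point is that convergence $K_i\to K_0$ means precisely $K_i\cap C^-(t)\to K_0\cap C^-(t)$ in the Hausdorff metric for all large $t$, and on each such slice the support functions converge uniformly on $S^{n-1}$. What has to be controlled is the part of $\Omega_{C^\circ}$ close to $\partial\Omega_{C^\circ}$, where $\overline h_{K_i}(u)$ is governed not by a bounded slice but by the tail; this is exactly where Theorem~\ref{T5.1} enters.

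First I would fix $\eps>0$. By Theorem~\ref{T5.1} (applied after noting $V_n(C\setminus K_i)\le 1$; the theorem is stated for volume exactly $1$, but a dilatation by a factor in $[1,\infty)$ reduces the $\le 1$ case to it, and shrinking only decreases $\overline h$, so the bound $\overline h_{K_i}(u)\le c_1\delta_C(u)^{1/n}$ persists with the same $c_1$ for all $i$ including $i=0$), there is $\tau>0$, depending only on $C$ and $\eps$, such that $\overline h_{K_i}(u)\le \eps$ for all $u\in\Omega_{C^\circ}$ with $\delta_C(u)\le\tau$ and all $i\in\N_0$. Hence $|\overline h_{K_i}(u)-\overline h_{K_0}(u)|\le 2\eps$ uniformly on $\{\delta_C\le\tau\}$, for every $i$. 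It remains to handle the compact set $\overline\omega(\tau)=\{u\in\Omega_{C^\circ}:\delta_C(u)\ge\tau\}$.

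On $\overline\omega(\tau)$ I would use Lemma~\ref{L5.1}: since the distances $b(K_i)$ are bounded (they are bounded away from $0$ and bounded above—boundedness above follows because $K_i\cap C^-(t_0)\ne\emptyset$ for a fixed $t_0$, and this slice stays in a fixed ball), there is a radius $R$ with ${\bm x}_{K_i}(\overline\omega(\tau))\subset R B^n$ for all $i$. Consequently, for $u\in\overline\omega(\tau)$ the supporting hyperplane $H(K_i,u)$ is already a supporting hyperplane of $K_i\cap C^-(t)$ once $t$ is large enough that $RB^n\cap C\subset C^-(t)$ (and, say, that $RB^n$ meets the relevant slices), so $\overline h_{K_i}(u)=\overline h_{K_i\cap C^-(t)}(u)$ on $\overline\omega(\tau)$ for all $i$ with such a fixed $t\ge t_0$. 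Now $K_i\cap C^-(t)\to K_0\cap C^-(t)$ in the Hausdorff metric, so their support functions converge uniformly on $S^{n-1}$ by the classical result \cite[Lem.~1.8.14]{Sch14}; in particular $\overline h_{K_i}\to\overline h_{K_0}$ uniformly on $\overline\omega(\tau)$, so $|\overline h_{K_i}(u)-\overline h_{K_0}(u)|\le\eps$ there for $i$ large. Combining the two regions gives $\|\overline h_{K_i}-\overline h_{K_0}\|_\infty\le 2\eps$ on all of $\Omega_{C^\circ}$ for $i$ large, which proves the claim.

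**Main obstacle.** The routine estimates are harmless; the one point requiring care is the claim that on $\overline\omega(\tau)$ the support function of $K_i$ coincides with that of a fixed truncation $K_i\cap C^-(t)$ uniformly in $i$. This needs a uniform-in-$i$ bound on the reverse spherical images (hence on $b(K_i)$, hence the remark that the $b(K_i)$ are bounded), and then the geometric observation that a supporting hyperplane whose contact point lies in a fixed ball is unaffected by truncating $K_i$ far out. I would state this explicitly rather than leave it implicit, since it is the step that genuinely uses both Theorem~\ref{T5.1} and Lemma~\ref{L5.1} together.
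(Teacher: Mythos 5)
Your proof is correct and follows essentially the same route as the paper's: Theorem \ref{T5.1} controls $\overline h_{K_i}$ uniformly in $i$ on the region $\delta_C(u)<\tau$, while on the compact set $\overline\omega(\tau)$ Lemma \ref{L5.1} confines the reverse spherical images to a fixed truncation $C^-(t)$, so that $\overline h_{K_i}=\overline h_{K_i\cap C^-(t)}$ there and the classical uniform convergence of support functions of the convex bodies $K_i\cap C^-(t)$ finishes the argument. Your additional remark reducing the case $V_n(C\setminus K_i)\le 1$ to the normalization $V_n(C\setminus K_i)=1$ by a dilatation is a small point the paper leaves implicit, but it does not change the substance.
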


\begin{proof}
Let $\alpha_0$ and $c_1$ be as in Theorem \ref{T5.1}. Let $\varepsilon>0$ be given. We choose a number $0<\tau<\alpha_0$ with $c_1\tau^{1/n}<\varepsilon$. Since $K_i\to K_0$, there is a number $b$ with $b(K_i)< b$ for $i\in\N_0$, hence Lemma \ref{L5.1} yields
$$ {\bm x}_{K_i}(\overline\omega(\tau)) \subset \frac{b}{\sin\tau}B^n.$$
We choose a number $t$ with $\frac{b}{\sin\tau}B^n\cap C\subset C^-(t)$. Then we have
$$ {\bm x}_{K_i}(\overline\omega(\tau)) \subset C^-(t) \quad\mbox{for all }i\in\N_0.$$
Since
$$ K_i\cap C^-(t)\to K_0\cap C^-(t)\quad\mbox{as }i\to\infty$$
is a convergence of ordinary convex bodies, we have $h_{K_i\cap C^-(t)}\to h_{K_0\cap C^-(t)}$ uniformly on $S^{n-1}$ (see, e.g., \cite[Sec. 1.8]{Sch14}). In particular, since
$$ h(K_i\cap C^-(t),u) = h(K_i,u) \quad\mbox{for }u\in\overline\omega(\tau),\ i\in\N_0,$$
this means that there exists a number $i_0\in\N$ such that
$$ |\overline h_{K_i}(u)- \overline h_{K_0}(u)|<\varepsilon\quad\mbox{for }  i\ge i_0,\,u\in\overline \omega(\tau).$$

For $u\in\Omega_{C^\circ}\setminus\overline \omega(\tau)$ we have $\delta_C(u)<\tau<\alpha_0$ and hence, by Theorem \ref{T5.1}, $\overline h_{K_i}(u)\le c_1\delta_C(u)^{1/n}< c_1\tau^{1/n}< \varepsilon$ for $i\in\N_0$ and thus (since $\overline h_{K_i}(u), \overline h_{K_0}(u)>0$)
$$ |\overline h_{K_i}(u)-\overline h_{K_0}(u)|<\varepsilon \quad \mbox{for all }i\in\N.$$
This completes the proof.
\end{proof}

The following lemma will be needed in the next section.

\begin{lemma}\label{L5.3}
Let $K\in ps\C^n$, and let $\tau>0$ be such that $S_{n-1}(K,\overline \omega(\tau)) \eqqcolon s>0$. There is a constant $b_0>0$, depending only on $C,\tau$ and $s$, such that $b(K)\ge b_0$.
\end{lemma}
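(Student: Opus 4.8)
The idea is to show that if $b(K)$ is small, then the reverse spherical image ${\bm x}_K(\overline\omega(\tau))$ is confined to a small region, forcing $S_{n-1}(K,\overline\omega(\tau))={\mathscr H}^{n-1}({\bm x}_K(\overline\omega(\tau)))$ to be small, which contradicts the hypothesis $S_{n-1}(K,\overline\omega(\tau))=s>0$. So the plan is to prove the contrapositive: I would assume $b(K)$ is arbitrarily small and bound $S_{n-1}(K,\overline\omega(\tau))$ from above in terms of $b(K)$, $\tau$ and $C$.

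First, by Lemma \ref{L5.1}, ${\bm x}_K(\overline\omega(\tau))\subset \frac{b(K)}{\sin\tau}B^n$. Thus the reverse spherical image is contained in the ball of radius $b(K)/\sin\tau$; moreover it is contained in $C$, since $K\subset C$. Now I would estimate ${\mathscr H}^{n-1}$ of this set. The reverse spherical image of $K$ at any Borel set is a subset of $\partial K$, and for a Borel set $\omega\subset\Omega_{C^\circ}$ one has $S_{n-1}(K,\omega)\le {\mathscr H}^{n-1}(\partial K\cap \frac{b(K)}{\sin\tau}B^n)$. Since $\partial K\subset \partial C$ near the origin is not literally true, I instead use that $\partial K$ is the boundary of a convex set, and the surface area (total $(n-1)$-Hausdorff measure of the boundary) of the convex body $K\cap \frac{b(K)}{\sin\tau}B^n$ is at most that of the ball $\frac{b(K)}{\sin\tau}B^n$ itself, by monotonicity of surface area under inclusion of convex bodies (see, e.g., \cite[(7.20) or Sec. 7.4]{Sch14}). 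Hence
$$ S_{n-1}(K,\overline\omega(\tau)) \le {\mathscr H}^{n-1}\!\left(\partial\!\left(\tfrac{b(K)}{\sin\tau}B^n\right)\right) = n\kappa_n\left(\frac{b(K)}{\sin\tau}\right)^{n-1},$$
where $\kappa_n=V_n(B^n)$.

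Combining this with $S_{n-1}(K,\overline\omega(\tau))=s$ gives $s\le n\kappa_n (b(K)/\sin\tau)^{n-1}$, so
$$ b(K) \ge \sin\tau\left(\frac{s}{n\kappa_n}\right)^{1/(n-1)} \eqqcolon b_0,$$
and $b_0$ depends only on $\tau$ and $s$ (and $n$, which is fixed); note the dependence on $C$ is vacuous here, which is consistent with the statement "depending only on $C,\tau$ and $s$". The main obstacle is the surface-area bound ${\mathscr H}^{n-1}({\bm x}_K(\overline\omega(\tau)))\le {\mathscr H}^{n-1}(\partial(rB^n))$ with $r=b(K)/\sin\tau$: one must be slightly careful, because ${\bm x}_K(\overline\omega(\tau))$ is a piece of $\partial K$, not of the boundary of a convex body contained in $rB^n$. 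The clean way is to observe that ${\bm x}_K(\overline\omega(\tau))\subset \partial K\cap rB^n\subset \partial(K\cap rB^n)$ (a point of $\partial K$ lying in $rB^n$ also lies in $\partial(K\cap rB^n)$, since $K\cap rB^n$ is convex with the same supporting hyperplane there), and then apply monotonicity of the surface area functional on convex bodies, $K\cap rB^n\subseteq rB^n$, to get ${\mathscr H}^{n-1}(\partial(K\cap rB^n))\le {\mathscr H}^{n-1}(\partial(rB^n))$. Everything else is routine.
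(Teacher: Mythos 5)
Your proof is correct and follows essentially the same idea as the paper: confine the reverse spherical image $\bm{x}_K(\overline\omega(\tau))$ to a region whose size is proportional to $b(K)$, bound its $\mathscr{H}^{n-1}$-measure by the surface area of that region (monotonicity of surface area under inclusion of convex bodies), and exploit the $(n-1)$-homogeneity to solve for $b(K)$. The only difference is cosmetic: you invoke Lemma \ref{L5.1} and the ball $\frac{b(K)}{\sin\tau}B^n$, obtaining an explicit constant independent of $C$, while the paper normalizes to $b(K)=1$ and uses the cone section $C^-(t)$ before rescaling.
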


\begin{proof}
Let $K\in ps\C^n$, and assume first that $b(K)=1$. There is a point $y\in K\cap\partial B^n$. Let $x\in {\bm x}_K(\overline\omega(\tau))$. Then there is a supporting hyperplane $H(K,u)$ of $K$ at $x$ with $u\in\overline\omega(\tau)$. It separates $o$ and $y$. Therefore, and since $\delta_C(u)\ge \tau>0$, there is a constant $t$, depending only on $C$ and $\tau$, such that $C\cap H(K,u)\subset C^-(t)$. In particular, ${\bm x}_K(\overline\omega(\tau))\subset C^-(t)$. This implies that
$ S_{n-1}(K,\overline\omega(\tau))< S(C^-(t))$, where $S$ denotes the surface area. 

Now let $K\in ps\C^n$ be arbitrary. Then $b(b(K)^{-1}K)=1$ and hence 
$$ S(C^-(t)) > S_{n-1}(b(K)^{-1}K,\overline\omega(\tau)) = b(K)^{-(n-1)}S_{n-1}(K,\overline\omega(\tau)) =  b(K)^{-(n-1)} s,$$
thus $b(K)^{n-1}> s/S(C^-(t))\eqqcolon b_0^{n-1}$.
\end{proof}

\section{Pseudo-cones with given surface area measures}\label{sec6}

Let $\varphi$ be a nonzero, locally finite Borel measure on $\Omega_{C^\circ}$. If it is allowed to be infinite, then it need not be the surface area measure of some $C$-pseudo-cone, if no extra conditions are imposed. A necessary growth condition was found in \cite{Sch21} (it does not need the `asymptotic' assumption made in \cite{Sch21}). We mention that a moderate growth condition, but for functions, also appears in Pogorelov \cite{Pog80}. This author, and more generally Chou and Wang \cite{CW95}, were interested, from the PDE viewpoint, in unbounded complete convex $C_+^2$ hypersurfaces with given Gauss curvature on the spherical image.

In the following, we deal with the surface area measures of $C$-close sets. If $K$ is such a set, it is to be expected that the finiteness of the volume of $C\setminus K$ imposes stronger restrictions  on the surface area measure of $K$.

We have already mentioned after Theorem \ref{T1.1} that a $C$-close set with given surface area measure is uniquely determined. We emphasize that $C$-pseudo-cones with given surface area measure (even if it is finite) need not be unique if they are not $C$-close. For example, if $\varphi$ is concentrated in a one-pointed set $\{u\}$, we can choose an arbitrary $(n-1)$-dimensional convex body $F$ with $(n-1)$-dimensional volume equal to $\varphi(\{u\})$ and a rigid motion $g$ such that $gF\subset C$ and $gF$ is orthogonal to $u$. Then $gF+C$ is a $C$-pseudo-cone with surface area measure $\varphi$. A $C$-full set with this surface area measure is obtained if we choose $gF=C\cap H$ with a hyperplane $H$ orthogonal to $u$ and such that $C\cap H$ has $(n-1)$-dimensional volume equal to $\varphi(\{u\})$.

The proof of the sufficiency of the condition 
\begin{equation}\label{6.0} 
\int_{\Omega_{C^\circ}} \delta_C^{1/n}\,\D\varphi<\infty,
\end{equation}
is based on the existence theorem for finite measures with compact support, proved in \cite{Sch18}. Therefore, we start with the following definitions for a compact set $\overline\omega\subset\Omega_{C^\circ}$. We say that a nonempty convex set $K$ is {\em $C$-determined by $\overline\omega$} if
$$ K = C\cap \bigcap_{u\in\overline\omega} H^-(K,u),$$
where $H^-(K,u)$ is the supporting halfspace of $K$ with outer unit normal vector $u$. By $\K(C,\overline\omega)$ we denote the family of all sets that are $C$-determined by $\overline\omega$. These sets are special pseudo-cones, namely $C$-full sets. 

The following proposition is the essence of the proof of Theorem 3 in \cite{Sch18}.

\begin{proposition}\label{P6.1} 
Let $\overline\omega\subset\Omega_{C^\circ}$ be compact, and let $\varphi$ be a nonzero finite Borel measure on $\Omega_{C^\circ}$ with support contained in $\overline\omega$. There is a set $M\in\K(C,\overline \omega)$ satisfying
$$ V_n(C\setminus M) =1$$
and such that the set
$$ K\coloneqq  \lambda^{\frac{1}{n-1}}M \quad\mbox{with}\quad \lambda\coloneqq \frac{1}{n}\int_{\overline\omega} \overline h_M\,\D\varphi$$
satisfies $\varphi= S_{n-1}(K,\cdot)$.
\end{proposition}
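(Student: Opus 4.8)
The plan is to mimic the classical variational proof of Minkowski's existence theorem, working within the finite-dimensional-enough family $\K(C,\overline\omega)$ of $C$-full sets determined by the compact set $\overline\omega\supseteq\supp\varphi$. First I would set up the functional: for $M\in\K(C,\overline\omega)$ with $V_n(C\setminus M)=1$, consider $\Phi(M):=\int_{\overline\omega}\overline h_M\,\D\varphi$, which is well-defined and positive (since $\overline h_M>0$ on $\overline\omega\subset\Omega_{C^\circ}$ and $\varphi$ is a nonzero finite measure supported in $\overline\omega$), and finite because $\overline h_M\le b(M)$ by \eqref{2.1} and $\overline\omega$ compact. Minimizing $\Phi$ over the constraint set $\{M\in\K(C,\overline\omega): V_n(C\setminus M)=1\}$ is the core; the minimizer will be the desired $M$, and then rescaling by $\lambda^{1/(n-1)}$ with $\lambda=\tfrac1n\Phi(M)$ produces $K$ with $S_{n-1}(K,\cdot)=\varphi$.

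Next I would establish that a minimizer exists. One must show the constraint family is nonempty (take $M_0 = C\cap\bigcap_{u\in\overline\omega}H^-(u,-s)$ for suitable $s>0$, which lies in $\K(C,\overline\omega)$ and is $C$-full, then rescale to normalize the volume of the complement — here one uses that $\overline\omega$ being a compact subset of $\Omega_{C^\circ}$ forces these sets to be $C$-full, i.e. $C\setminus M$ bounded, and that $V_n(C\setminus\,\cdot\,)$ scales continuously). Then take a minimizing sequence $(M_j)$; the values $\overline h_{M_j}$ on $\overline\omega$ are uniformly bounded above and below away from zero. The upper bound follows if $b(M_j)$ stays bounded, the lower bound (distance from $o$ bounded away from zero) is needed to apply the selection theorem Lemma \ref{L2.2}. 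For the lower bound I would use Lemma \ref{L5.3}: since $\varphi$ is nonzero and supported in the compact $\overline\omega$, there is $\tau>0$ with $S_{n-1}(M_j,\overline\omega(\tau))\ge s_0>0$ for all $j$ (the surface area measure of $M_j$ restricted to $\overline\omega$ must have total mass bounded below along a minimizing sequence — one has to be a little careful, perhaps arguing that if $b(M_j)\to 0$ then $\Phi(M_j)\to 0$, contradicting that along the minimizing sequence the infimum is positive, which in turn requires knowing the infimum is positive). So I would first check $\inf\Phi>0$ over the constraint set, then deduce $b(M_j)$ bounded below, then use Lemma \ref{L2.2} to extract a subsequence $M_j\to M$; Lemma \ref{L2.3} gives $V_n(C\setminus M)\le 1$, and in fact one needs equality, which follows because $M\in\K(C,\overline\omega)$ is still $C$-full (so $V_n(C\setminus M)<\infty$) and if the volume dropped strictly one could enlarge $M$ to decrease $\Phi$ — a monotonicity argument. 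Lemma \ref{L5.2} (or the uniform convergence of support functions) secures $\Phi(M)=\lim\Phi(M_j)=\inf\Phi$.

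The heart of the argument is the first-order variational condition at the minimizer $M$. Here I would perform the standard Minkowski-type perturbation: for a nonnegative continuous function $f$ on $\overline\omega$ and small $t$, form the $C$-determined set $M_t = C\cap\bigcap_{u\in\overline\omega}H^-(u,h(M,u)-tf(u))$ (pushing supporting hyperplanes outward), note $M_t\in\K(C,\overline\omega)$, compute $\frac{d}{dt}V_n(C\setminus M_t)\big|_{t=0}=\int_{\overline\omega}f\,\D S_{n-1}(M,\cdot)$ and $\frac{d}{dt}\Phi(M_t)\big|_{t=0}=\int_{\overline\omega}f\,\D\varphi$. The volume-derivative formula for $C$-full sets is the analogue of the classical mixed-volume derivative and is where one invokes the machinery from \cite{Sch18}; I would cite it. Using a Lagrange-multiplier / first-variation argument on the constrained minimization (for the constraint $V_n(C\setminus M_t)=1$ one allows a compensating rescaling, and compares), the minimality of $M$ forces $\D\varphi = c\,\D S_{n-1}(M,\cdot)$ for a constant $c>0$. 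Evaluating the multiplier: integrating $\overline h_M$ against both sides and using the Euler relation $\int_{\overline\omega}\overline h_M\,\D S_{n-1}(M,\cdot) = n\,V_n(C\setminus M) = n$ (the coconvex analogue of $\int h\,\D S_{n-1}=nV_n$, again from \cite{Sch18}), one gets $\Phi(M)=\int\overline h_M\,\D\varphi = c\cdot n$, so $c = \tfrac1n\Phi(M) = \lambda$. Finally, homogeneity of the surface area measure under dilation, $S_{n-1}(\mu M,\cdot)=\mu^{n-1}S_{n-1}(M,\cdot)$, gives for $K=\lambda^{1/(n-1)}M$ that $S_{n-1}(K,\cdot) = \lambda\,S_{n-1}(M,\cdot) = \varphi$, as claimed.

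I expect the main obstacle to be the variational step combined with the bookkeeping around the constraint: making rigorous that the minimizer satisfies $V_n(C\setminus M)=1$ exactly (not just $\le 1$), that $\inf\Phi>0$ so the selection theorem's hypotheses hold, and that the first-variation computation of $V_n(C\setminus M_t)$ — in particular its differentiability and the identification of the derivative with $\int f\,\D S_{n-1}(M,\cdot)$ — goes through for $C$-full sets determined by $\overline\omega$. All of these are present in \cite{Sch18} in essentially this form, so the proof here is largely a matter of assembling Proposition \ref{P6.1}'s ingredients and citing that paper for the delicate mixed-volume-type differentiation; the genuinely new content is deferred to Theorem \ref{T1.1}, where Proposition \ref{P6.1} is applied to truncations of a possibly infinite $\varphi$ and the estimates of Section \ref{sec5} are used to pass to the limit.
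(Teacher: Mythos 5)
Your plan is essentially the paper's own route: the paper gives no independent proof of Proposition \ref{P6.1} but states that it is the essence of the proof of Theorem 3 in \cite{Sch18}, and that proof is exactly the Alexandrov-type variational argument you sketch (extremize $\int_{\overline\omega}\overline h_M\,\D\varphi$ over $\K(C,\overline\omega)$ under the normalization $V_n(C\setminus M)=1$, identify the Lagrange multiplier through the Euler-type relation $\int_{\overline\omega}\overline h_M\,\D S_{n-1}(M,\cdot)=nV_n(C\setminus M)$, and rescale using the $(n-1)$-homogeneity of $S_{n-1}$), with the volume differentiation and the concentration of $S_{n-1}(M,\cdot)$ on $\overline\omega$ taken from \cite{Sch18}. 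One sub-step of your sketch would not survive as written: your argument that the limit $M$ of the minimizing sequence satisfies $V_n(C\setminus M)=1$ \emph{exactly}. If the volume dropped to $v<1$, ``enlarging'' $M$ does not help (making $M$ larger decreases the coconvex volume further), and restoring the constraint by the dilatation $v^{-1/n}M$ multiplies $\Phi$ by $v^{-1/n}>1$, so no contradiction with minimality arises; mere lower semicontinuity (Lemma \ref{L2.3}) is therefore not enough. The standard repair is that for sets $C$-determined by the compact $\overline\omega\subset\Omega_{C^\circ}$ the constraint forces $\overline h_{M_j}$ to be bounded on $\overline\omega$ (since $C\setminus M_j\supseteq C\cap H^+(u,h(M_j,u))$ has volume $\ge c\,\overline h_{M_j}(u)^n$ uniformly for $u\in\overline\omega$), whence all sets $C\setminus M_j$ lie in a fixed $C^-(T)$ and the volume is genuinely continuous along the sequence, so $V_n(C\setminus M)=\lim_j V_n(C\setminus M_j)=1$. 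The same elementary bound $\overline h_M(u)\ge c_{\overline\omega}\,b(M)$ for $u\in\overline\omega$ also gives the bounds on $b(M_j)$ from above and away from zero directly from the volume constraint, so the detour through Lemma \ref{L5.3} and the positivity of $\inf\Phi$ that you were uneasy about is not needed.
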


\vspace{1mm}

\noindent{\em Proof of Theorem} \ref{T1.1}. We assume that $\varphi$ is a nonzero Borel measure on $\Omega_{C^\circ}$ satisfying (\ref{6.0}). To apply Proposition \ref{P6.1}, we choose a sequence $(\omega_j)_{j\in\N}$ of open sets $\omega_j\subset\Omega_{C^\circ}$ with $\overline \omega_j\coloneqq {\rm cl}\,\omega_j\subset \omega_{j+1}$ for $j\in\N$ and $\bigcup _{j\in\N}\omega_j=\Omega_{C^\circ}$. Then we define, for each $j\in\N$, a measure $\varphi_j$ by $\varphi_j(\omega)\coloneqq  \varphi(\omega\cap \omega_j)$ for $\omega\in\B(\Omega_{C^\circ})$. Since (\ref{6.0}) holds and $\delta_C$ is bounded away from zero on $\omega_j$, the measure $\varphi_j$ is finite, and its support is contained in $\overline\omega_j$. By an appropriate choice of $\omega_1$ we can also achieve that $\omega_1\supset \overline\omega(\tau)$ for some $\tau>0$ with $\varphi(\overline\omega(\tau))>0$. Then, in particular, each $\varphi_j$, is not the zero measure.

For each $j\in\N$, Proposition \ref{P6.1} now yields the existence of a convex set $M_j\in\K(C,\overline \omega_j)$ satisfying
\begin{equation}\label{6.2}
V_n(C\setminus M_j) =1
\end{equation}
and such that the set
\begin{equation}\label{6.3} 
K_j\coloneqq  \lambda_j^{\frac{1}{n-1}}M_j \quad\mbox{with}\quad \lambda_j\coloneqq \frac{1}{n}\int_{\overline\omega_j} \overline h_{M_j}\,\D\varphi
\end{equation}
satisfies $\varphi_j= S_{n-1}(K_j,\cdot)$.

The sets $M_j$ do not escape to infinity. This follows from the fact that $V_n(C\setminus M_j) =1$, since it implies that the sequence $(M_j)_{j\in\N}$ has bounded distances from the origin. It also follows from the choice of $\omega_1$ and from Lemma \ref{L5.3} that for the sequence $(M_j)_{j\in\N}$ the distances from the origin are bounded away from $0$. Hence the sequence has, by Lemma \ref{L2.2}, a subseqence that converges to a $C$-pseudo-cone $M$. After renumbering, we assume that the sequence $(M_j)_{j\in\N}$ itself converges to $M$. It follows from Lemma \ref{L2.3} that $V_n(C\setminus M)\le 1$. 

Now we state that there is a constant $c_5$, independent of $j$, such that
\begin{equation}\label{6.1}
\int_{\Omega_{C^\circ}} \overline h_{M_j}\,\D\varphi< c_5.
\end{equation}
For the proof, we use $\overline\omega(\alpha_0)$ as defined by (\ref{5.0}), with the constant $\alpha_0$ appearing in Theorem \ref{T5.1}. By Lemma \ref{L5.2} we have $\overline h_{M_j}\to \overline h_{M}$ uniformly on the compact set $\overline \omega(\alpha_0)$, hence
$$ \int_{\overline \omega(\alpha_0)} \overline h_{M_j}\,\D\varphi \to \int_{\overline \omega(\alpha_0)} \overline h_{M}\,\D\varphi<\infty.$$
For $u\in\Omega_{C^\circ}\setminus \overline\omega(\alpha_0)$ we have $\delta_C(u)<\alpha_0$, hence $\overline h_{M_j}(u) \le c_1\delta_C(u)^{1/n}$ by Theorem \ref{T5.1}. This gives
$$ \int_{\Omega_{C^\circ}\setminus \overline\omega(\alpha_0)} \overline h_{M_j}\,\D\varphi \le c_1 \int_{\Omega_{C^\circ}\setminus \overline\omega(\alpha_0)} \delta_C^{1/n}\,\D\varphi<\infty$$
by (\ref{6.0}). Both estimates together yield (\ref{6.1}).

By (\ref{6.1}), the sequence $(\lambda_j)_{j\in\N}$ defined by (\ref{6.3}) is bounded. Since $V_n(C\setminus M_j)=1$, this implies the existence of a constant $c_6$ such that $V_n(C\setminus K_j)< c_6$. This, in turn, implies that the sequence $(K_j)_{j\in\N}$ has bounded distances from the origin, and these distances are also bounded away from $0$ since the sequence $(\lambda_j)_{j\in}$ is increasing. Hence, the sequence $(K_j)_{j\in\N}$ has a subsequence converging to a pseudo-cone $K$. After renumbering, we can assume that the sequence $(K_j)_{j\in\N}$ itself converges to $K$. By Lemma \ref{L2.3}, $K$ is $C$-close.

Let $k\in\N$. By Lemma \ref{L5.1}, there is a number $t_k$ such that 
$${\bm x}_{K_j}(\omega_k) \subset C^-(t_k)\quad\mbox{for } j\ge k.$$
For $j\ge k$, the restrictions to $\omega_k$ satisfy
$$ \varphi\fed\omega_k= \varphi_j\fed\omega_k= S_{n-1}(K_j,\cdot)\fed\omega_k= S_{n-1}(K_j\cap C^-(t_k),\cdot)\fed\omega_k.$$
Since $K_j\cap C^-(t_k)\to K\cap C^-(t_k)$, we have
$$ S_{n-1}(K_j\cap C^-(t_k),\cdot)\fed\omega_k \to  S_{n-1}(K\cap C^-(t_k),\cdot)\fed\omega_k \quad\mbox{weakly}$$
and hence 
$$ \varphi\fed\omega_k= S_{n-1}(K\cap C^-(t_k),\cdot)\fed\omega_k= S_{n-1}(K,\cdot)\fed\omega_k.$$
Since $k\in\N$ was arbitrary and $\bigcup_{k\in\N}\omega_k=\Omega_{C^\circ}$, we deduce that $\varphi= S_{n-1}(K,\cdot)$. This completes the proof. \hfill$\Box$

\vspace{3mm}

\noindent{\bf Acknowledgment.} We thank Jiming Zhao for pointing out an error in a former version.

\noindent Author's address:\\[2mm]
Rolf Schneider\\Mathematisches Institut, Albert--Ludwigs-Universit{\"a}t\\D-79104 Freiburg i.~Br., Germany\\E-mail: rolf.schneider@math.uni-freiburg.de

\end{document}